\newtheorem{thm}{Theorem}
\newtheorem{thmA}{Theorem}
\newtheorem{lem}{Lemma}[section]
\newtheorem{cor}{Corollary}
\newtheorem{prop}{Proposition}[section]
\theoremstyle{remark}
\newtheorem*{rem}{{\bf Remark}}
\theoremstyle{definition}
\newcommand{\Lip}{\operatorname{Lip}}
\begin{document}

\title[Compact homomorphisms on Lipschitz algebras]
{\boldmath Compact homomorphisms between algebras of $C(K)$-valued Lipschitz functions}


\author{Shinnosuke Izumi}
\author{Hiroyuki Takagi$^\dag$}
\thanks{$\dag$Deceased 25 November 2017. }

\address{Shinnosuke Izumi, Department of Mathematics and System Development, Interdisciplinary Graduate school of Science and Technology,Shinshu University, Matsumoto, 390-8621, Japan}
\email{17st201h@shinshu-u.ac.jp}

\address{Hiroyuki Takagi, Department of Mathematics, Faculty of Science, Shinshu University, Matsumoto, 390-8621, Japan}

\keywords{homomorphism, Lipschitz function, compact operator}

\subjclass[2010]{Primary 47B48, 47B07; Secondary 26A16, 46J10}

\begin{abstract}
We give a complete description of homomorphisms between two Banach algebras of Lipschitz functions with values in continuous functions.
We also characterize the compactness of those homomorphisms.
\end{abstract}

\maketitle

\section{Introduction}

Let $X$ be a compact metric space with metric $d_X$ and $\mathcal{A}$ a commutative Banach algebra with norm $\| \cdot \|_{\mathcal{A}}$.
By $C(X, \mathcal{A})$, 
we denote the Banach algebra of all $\mathcal{A}$-valued continuous functions on $X$, with norm
$$
 \| f \|_{C(X, \mathcal{A})} = \sup \bigl\{ \| f(x) \|_{\mathcal{A}} : x \in X \bigr\}.
$$
If an $\mathcal{A}$-valued function $f$ on $X$ satisfies 
$$
 \mathcal{L}_{X, \mathcal{A}} (f) = \sup_{x, x' \in X \atop x \neq x'} \frac{\| f(x) - f(x') \|_{\mathcal{A}}}{d_X (x, x')}
 < \infty ,
$$
then we say that $f$ is a \textsl{Lipschitz function}.
By $\Lip (X, \mathcal{A})$, 
we denote the set of all $\mathcal{A}$-valued Lipschitz functions on $X$.
Clearly, $\Lip (X, \mathcal{A}) \subset C(X, \mathcal{A})$ and 
$\Lip (X, \mathcal{A})$ is a Banach algebra with norm
$$
 \| f \|_{\Lip (X, \mathcal{A})} = \| f \|_{C(X, \mathcal{A})} + \mathcal{L}_{X, \mathcal{A}} (f) .
$$
In case that $\mathcal{A} = \mathbb{C}$, we write $C(X) = C(X, \mathbb{C})$ and $\Lip (X) = \Lip (X, \mathbb{C})$.
The Lipschitz algebra $\Lip (X)$ has been well studied.
The researches on this subject may be found in the book \cite{W}.
Here a mapping between two Banach algebras is said to be a \textsl{homomorphism}
if it preserving addition, scalar multiplication and multiplication.
Moreover if it maps unit to unit, then we say that it is \textsl{unital}.
One of the results is the description of homomorphisms between Lipschitz algebras.

\begin{thmA}[Sherbert, {\cite[Proposition 2.1]{Sh}}]
Suppose that $X$ and $Y$ are compact metric spaces with metrics $d_X$ and $d_Y$ respectively.
Then $T$ is a unital homomorphism from $\Lip (X)$ into $\Lip (Y)$, if and only if there exists a mapping $\varphi : Y \rightarrow X$ with 
$$
 \sup_{y, y' \in Y \atop y \neq y'} \frac{d_X ( \varphi (y), \varphi (y') )}{d_Y (y, y')} < \infty, 
$$
such that 
$$
 (Tf)(y) = f( \varphi (y) ) \qquad ( y \in Y )
$$
for all $f \in \Lip (X)$.
\end{thmA}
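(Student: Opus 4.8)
The plan is to prove the two implications separately, with essentially all the work in the ``only if'' direction. For the ``if'' direction, I would start from a map $\varphi : Y \to X$ whose ratio $L := \sup_{y \neq y'} d_X(\varphi(y),\varphi(y'))/d_Y(y,y')$ is finite and simply set $Tf := f \circ \varphi$, then verify the required properties. That composition preserves sums, scalar multiples, products, and carries the constant function $\mathbf{1}$ to $\mathbf{1}$ is immediate pointwise, so $T$ is a unital homomorphism as soon as it is well defined; and well-definedness is the only line needing computation, since $\mathcal{L}_{Y,\mathbb{C}}(f \circ \varphi) \le L\,\mathcal{L}_{X,\mathbb{C}}(f)$ and $\|f \circ \varphi\|_{C(Y)} \le \|f\|_{C(X)}$ give $f \circ \varphi \in \Lip(Y)$.

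For the ``only if'' direction the idea is to recover $\varphi$ from the maximal ideal space. For each $y \in Y$ the evaluation $\delta_y$ is a character of $\Lip(Y)$, and because $T$ is a \emph{unital} homomorphism, $\delta_y \circ T$ is a nonzero multiplicative linear functional on $\Lip(X)$, i.e. a character. The structural fact I would establish first is that every character $\chi$ of $\Lip(X)$ is a point evaluation $\delta_{x_0}$ for a unique $x_0 \in X$. To see this, suppose $\chi$ is no $\delta_x$; then for each $x$ there is $g_x \in \Lip(X)$ with $g_x(x) \neq \chi(g_x)$, and, writing $h_x = g_x - \chi(g_x)\mathbf{1}$, the function $u_x = h_x \overline{h_x} = |h_x|^2 \in \Lip(X)$ is nonnegative with $u_x(x) > 0$ but $\chi(u_x) = \chi(h_x)\chi(\overline{h_x}) = 0$. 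By continuity $u_x > 0$ on a neighborhood of $x$; compactness of $X$ yields finitely many such neighborhoods covering $X$, and $u = \sum_i u_{x_i}$ is then strictly positive on $X$ with $\chi(u) = 0$. But a strictly positive Lipschitz function on a compact space is bounded below, hence invertible in $\Lip(X)$ (its reciprocal is again Lipschitz), so $\chi(u) \neq 0$, a contradiction. Uniqueness of $x_0$ follows because $\Lip(X)$ separates the points of $X$.

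With this in hand I would define $\varphi(y)$ by $\delta_y \circ T = \delta_{\varphi(y)}$; evaluating at $f$ gives $(Tf)(y) = f(\varphi(y))$ for all $f$ and $y$, which is the asserted formula. It remains to bound the ratio of $\varphi$. Here I would apply the representation to the distance functions $f_{y'}(x) = d_X(x, \varphi(y'))$, each lying in $\Lip(X)$ with $\mathcal{L}_{X,\mathbb{C}}(f_{y'}) \le 1$ and $\|f_{y'}\|_{C(X)} \le \operatorname{diam} X$, hence with $\|f_{y'}\|_{\Lip(X)}$ bounded by a constant independent of $y'$. Since $(Tf_{y'})(y) = d_X(\varphi(y),\varphi(y'))$ and $(Tf_{y'})(y') = 0$, we obtain $d_X(\varphi(y),\varphi(y')) \le \mathcal{L}_{Y,\mathbb{C}}(Tf_{y'})\,d_Y(y,y') \le \|T\|\,\|f_{y'}\|_{\Lip(X)}\,d_Y(y,y')$, which yields the uniform bound on the ratio.

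I expect the obstacle to be twofold. The identification of the character space with $X$ is the technical heart: it genuinely uses compactness (through the finite subcover) together with the closure of $\Lip(X)$ under reciprocals of functions bounded away from zero. Second, the final estimate invoked $\|T\|$, so I must know beforehand that $T$ is bounded; this is not assumed, but follows from the automatic continuity of homomorphisms into the semisimple commutative algebra $\Lip(Y)$ (the evaluations separate points, so the Gelfand map is injective), which I would justify by the closed graph theorem using that each $\delta_y \circ T$ is a character and hence continuous.
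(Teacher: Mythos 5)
Your proposal is correct, and it follows essentially the same route the paper takes: although the paper quotes Theorem~A from Sherbert without proof, your argument is exactly the scalar case of its proof of Theorem~\ref{Thm1} --- characters of $\Lip(X)$ are point evaluations (Proposition~\ref{Prop:MIS}, whose standard proof via $|h_x|^2$, a finite subcover, and invertibility of functions bounded away from zero the paper delegates to \cite{Oi}), automatic continuity of $T$ from semisimplicity of the target (the paper cites \cite[Theorem 11.10]{R}, which is your closed-graph argument), and the distance function $x \mapsto d_X\bigl(x, \varphi(y')\bigr)$ to extract the uniform bound on the ratio, just as in the paper's lemma establishing (i). The converse direction likewise matches the paper's sufficiency computation in Lemma~\ref{L2}, restricted to the case $\mathcal{D} = Y \times M$ with $K$ and $M$ singletons.
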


This theorem has been developed in several directions.
In \cite{BJ}, 
F. Botelho and J. Jamison replaced $\Lip (X)$ by $\Lip (X, \mathcal{A})$, where 
$\mathcal{A}$ is the Banach algebra $\mathbf{c}$ of convergent sequences or 
the Banach algebra $\ell^{\infty}$ of bounded sequences.
They determined the unital homomorphisms from $\Lip (X, \mathbf{c})$ into $\Lip (Y, \mathbf{c})$ 
and those from $\Lip (X, \ell^{\infty})$ into $\Lip (Y, \ell^{\infty})$, 
where $X$ and $Y$ are compact metric spaces.

In general, if $K$ is a compact Hausdorff space, 
then $C(K)$ denotes the Banach algebras of all complex-valued continuous functions on $K$, 
with norm $\| f \|_{C(K)} = \sup_{\xi \in K} |f( \xi )|$.
In \cite{Oi}, S. Oi took up the algebra $\Lip (X, C(K))$ and proved the following theorem:

\begin{thmA}[{\cite[Oi]{Oi}}]\label{ThmOi}
Suppose that $X$ and $Y$ are as in Theorem A, and that $K$ and $M$ are compact Hausdorff spaces.
Assume that $Y$ is connected.
Then $T$ is a unital homomorphism from $\Lip (X, C(K))$ into $\Lip (Y, C(M))$
if and only if there exist a class $\{ \varphi_{\eta} \}_{\eta \in M}$ of mappings from $Y$ to $X$ with the properties {\rm (a)} and {\rm (b)}
and a continuous mapping $\psi : M \rightarrow K$ such that 
$$
 \bigl[ (Tf)(y) \bigr] (\eta) = \bigl[ f( \varphi_{\eta} (y) ) \bigr] ( \psi (\eta) ) 
 \qquad ( y \in Y, \ \eta \in M ) 
$$
for all $f \in \Lip(X, C(K))$.
\begin{enumerate}
\renewcommand{\labelenumi}{(\alph{enumi})}
\item
For each $y \in Y$, the mapping $\eta \mapsto \varphi_{\eta}(y)$ from $M$ to $X$ is continuous. 
\item
$\displaystyle \sup_{\eta \in M} \sup_{y, y' \in Y \atop y \neq y'} \frac{d_X ( \varphi_{\eta} (y), \varphi_{\eta} (y') )}{d_Y(y, y')} < \infty$.
\end{enumerate}
\end{thmA}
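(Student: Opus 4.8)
The plan is to treat the two implications separately, dispatching the ``if'' part by direct verification and devoting the bulk of the work to the ``only if'' part, where the goal is to recover the family $\{\varphi_\eta\}$ and the map $\psi$ from $T$.

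For the ``if'' direction, suppose such $\{\varphi_\eta\}$ and $\psi$ are given and define $Tf$ by the stated formula. For fixed $y$, the map $\eta\mapsto\bigl[f(\varphi_\eta(y))\bigr](\psi(\eta))$ is the composition of $\eta\mapsto(\varphi_\eta(y),\psi(\eta))$ with the jointly continuous function $(x,\xi)\mapsto[f(x)](\xi)$ on $X\times K$; by (a) and the continuity of $\psi$ this is continuous, so $(Tf)(y)\in C(M)$. Estimating $\|(Tf)(y)-(Tf)(y')\|_{C(M)}$ by taking the supremum over $\eta$ and invoking the Lipschitz bound on $f$ together with (b) shows $Tf\in\Lip(Y,C(M))$ with $\mathcal L_{Y,C(M)}(Tf)\le L\,\mathcal L_{X,C(K)}(f)$, where $L$ is the supremum in (b). Linearity, multiplicativity and unitality are immediate from the pointwise formula, so this part is routine.

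For the converse, fix $(y,\eta)\in Y\times M$. Since evaluation of a $C(M)$-valued function at $\eta$ and of a $C(K)$-valued function at a point are both characters, the functional $\chi_{y,\eta}(f)=\bigl[(Tf)(y)\bigr](\eta)$ is a character on $\Lip(X,C(K))$. The key lemma I would establish first is that \emph{every} character of $\Lip(X,C(K))$ is evaluation at some point $(x,\xi)\in X\times K$, i.e.\ $f\mapsto[f(x)](\xi)$. To prove it, identify $\Lip(X,C(K))$ with a subalgebra of $C(X\times K)$ via $\hat f(x,\xi)=[f(x)](\xi)$ and observe that this subalgebra is closed under complex conjugation and, crucially, inverse-closed: if $\hat f$ never vanishes then $\sup_x\|f(x)^{-1}\|_{C(K)}<\infty$ by compactness of $X\times K$, and the identity $f(x)^{-1}-f(x')^{-1}=f(x)^{-1}(f(x')-f(x))f(x')^{-1}$ shows $x\mapsto f(x)^{-1}$ is again Lipschitz. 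Given a character $\chi$ equal to no point evaluation, for each $(x,\xi)$ one produces $g$ with $\chi(g\bar g)=0$ but $\widehat{g\bar g}>0$ near $(x,\xi)$; a compactness argument then yields a finite sum $G=\sum_i g_i\bar g_i$ with $\hat G>0$ everywhere and $\chi(G)=0$, contradicting the invertibility of $G$. Applying the lemma to $\chi_{y,\eta}$ gives a pair of coordinates; testing against $h\otimes 1\colon x\mapsto h(x)\mathbf 1_{C(K)}$ ($h\in\Lip(X)$) and $1\otimes a\colon x\mapsto a$ ($a\in C(K)$) shows each coordinate is uniquely determined, so we may \emph{define} $\varphi_\eta(y)\in X$ and $\psi(y,\eta)\in K$ by $[(Tf)(y)](\eta)=[f(\varphi_\eta(y))](\psi(y,\eta))$.

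The heart of the argument, and the place where connectedness of $Y$ is indispensable, is to show $\psi(y,\eta)$ is independent of $y$. Fix $\eta$ and consider $\rho_\eta\colon C(K)\to\Lip(Y)$ given by $\rho_\eta(a)(y)=[T(1\otimes a)(y)](\eta)=a(\psi(y,\eta))$; since $a\mapsto 1\otimes a$ is isometric and $T$ is automatically bounded (both algebras being semisimple), $\rho_\eta$ is bounded, say $\mathcal L_Y\bigl(a\circ\psi(\cdot,\eta)\bigr)\le C\|a\|_{C(K)}$. If $\psi(y,\eta)\neq\psi(y',\eta)$, a Urysohn function $a$ with $\|a\|_{C(K)}=1$ separating these two points forces $d_Y(y,y')\ge 1/C$; hence $y\mapsto\psi(y,\eta)$ is constant on every set of diameter $<1/C$, so its fibres are clopen and, $Y$ being connected, it is constant. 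This defines $\psi(\eta)$, whose continuity in $\eta$ follows because $\eta\mapsto a(\psi(\eta))=[T(1\otimes a)(y_0)](\eta)$ is continuous for every $a\in C(K)$ and $C(K)$ generates the topology of $K$. I expect this step to be the main obstacle: without connectedness the $K$-coordinate can genuinely depend on $y$.

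It remains to verify (a) and (b). For (a), testing with $f=h\otimes 1$ gives that $\eta\mapsto h(\varphi_\eta(y))=[T(h\otimes1)(y)](\eta)$ is continuous for every $h\in\Lip(X)$; since the functions $d_X(\cdot,x_0)$ lie in $\Lip(X)$ and separate points of $X$, the map $\eta\mapsto\varphi_\eta(y)$ is continuous. For (b), for fixed $\eta$ the map $\sigma_\eta\colon\Lip(X)\to\Lip(Y)$, $\sigma_\eta(h)(y)=h(\varphi_\eta(y))$, is a unital homomorphism, so Theorem~A applies and $\varphi_\eta$ has finite Lipschitz ratio; the uniform bound follows by feeding $h=d_X(\cdot,\varphi_\eta(y'))$ into $\mathcal L_Y(\sigma_\eta h)\le\|\sigma_\eta\|\,\|h\|_{\Lip(X)}\le\|T\|(\operatorname{diam}X+1)$, which bounds $d_X(\varphi_\eta(y),\varphi_\eta(y'))/d_Y(y,y')$ independently of $\eta$. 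Finally, the constructed pair reproduces $T$ on all of $\Lip(X,C(K))$, since for each $(y,\eta)$ both sides equal the character $\chi_{y,\eta}$ by construction.
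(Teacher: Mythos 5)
Your proposal is correct. One caveat on the comparison: the paper does not prove this statement itself (it quotes it from Oi); its own machinery appears in the proof of Theorem~\ref{Thm1}, which generalizes Theorem~B by dropping unitality and connectedness. Measured against that, your argument follows essentially the same route: you realize each functional $f\mapsto \bigl[(Tf)(y)\bigr](\eta)$ as a character, identify characters with point evaluations on $X\times K$, invoke automatic continuity of $T$ (semisimplicity of the target, as in the paper's appeal to Rudin), use a Urysohn function to show $y\mapsto\psi(y,\eta)$ cannot take distinct values at points closer than $1/\|T\|$ (this is exactly the paper's Lemma~\ref{L6}), and test with $h=d_X(\cdot,\varphi_\eta(y'))$ to get the uniform bound in (b), exactly as in the paper's proof of condition (i). The genuine differences are two. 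First, where you use connectedness of $Y$ to upgrade the local constancy of $\psi(\cdot,\eta)$ to global constancy via clopen fibres, the paper, working without connectedness, instead extracts the finite clopen decomposition with separation constant $r$ of its condition (ii); your step shows precisely where connectedness enters, which is the point the paper's generalization removes. Second, where the paper simply cites Oi for the identification of the maximal ideal space of $\Lip(X,C(K))$ with $X\times K$ (Proposition~\ref{Prop:MIS}), you prove it from scratch: the realization of $\Lip(X,C(K))$ as a conjugation-closed, inverse-closed subalgebra of $C(X\times K)$ and the compactness argument with $G=\sum_i g_i\bar g_i$ are correct and make your write-up self-contained at the cost of length. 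Two minor remarks: your detour through Theorem~A for (b) is superfluous, since the displayed estimate $\mathcal{L}_Y(\sigma_\eta h)\le\|T\|(\operatorname{diam}X+1)$ already yields the uniform bound directly; and your continuity proofs for (a) and for $\psi$ via separating test functions ($d_X(\cdot,x_0)$ and Urysohn functions) are a mild variant of the paper's peak-function argument for joint continuity (its Lemma~\ref{L4}) --- both work, though the paper's version delivers joint continuity of $(\varphi,\psi)$, which is what its more general Theorem~\ref{Thm1} needs.
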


This theorem leads to the result of Botelho and Jamison mentioned above.
Here we turn our attention to two assumptions in Theorem \ref{ThmOi}.
One 
is that $Y$ is connected and the other is that $T$ is unital.
These assumptions seem to be inessential but they simplify the statement of theorem.
In order to remove these assumptions and to state a general result, 
we consider a function $f$ in $\Lip (X, C(K))$ 
as a function of two variables $x \in X$ and $\xi \in K$.
So we write $f(x, \xi)$ instead of $\bigl[ f(x) \bigr] (\xi)$.
Let $f$ be a function on $X \times K$.
With $x \in X$ we associate a function $f_x$ defined on $K$ by $f_x (\xi) = f(x, \xi)$. 
Similarly, if $\xi \in K$, $f^{\xi}$ is the function defined on $X$ by $f^{\xi} (x) = f(x, \xi)$.
In general, for any mapping of two variables, we use the same expression:
For example, if $\psi : Y \times M \rightarrow K$, 
then $\psi^{\eta} : Y \rightarrow K$ and $\psi_y : M \rightarrow K$ are 
defined by $\psi^{\eta} (y) = \psi (y, \eta)$ and $\psi_y (\eta) = \psi (y, \eta)$.

A subset $A$ of a topological space is said to be \textsl{clopen}, 
if $A$ is both open and closed.
We do not exclude the possibility that a clopen set is empty.
We understand that the statement about an empty set is true.

\begin{thm}\label{Thm1}
Suppose that $X$ and $Y$ are compact metric spaces with metrics $d_X$ and $d_Y$ respectively, 
and that $K$ and $M$ are compact Hausdorff spaces.
If $T$ is a homomorphism from $\Lip (X, C(K))$ into $\Lip (Y, C(M))$, 
then there exist a clopen subset $\mathcal{D}$ of $Y \times M$ and 
two continuous mappings $\varphi : \mathcal{D} \rightarrow X$ and 
$\psi : \mathcal{D} \rightarrow K$ with {\rm (i)} and {\rm (ii)} 
such that $T$ has the form:
\begin{equation}\label{eq:Form}
 (Tf) (y, \eta) = \begin{cases}
  f \bigl( \varphi (y, \eta), \psi(y, \eta) \bigr)  &  \bigr( (y, \eta) \in \mathcal{D} \bigr) \\
 0  &  \bigl( (y, \eta) \in ( Y \times M ) \smallsetminus \mathcal{D} \bigr)
 \end{cases} 
\end{equation}
for all $f \in \Lip(X, C(K))$.
\begin{enumerate}
\renewcommand{\labelenumi}{(\roman{enumi})}
\item
There exists a bound $L \geq 0$ such that 
\begin{equation}\label{eq:Phi}
 (y, \eta), (y', \eta) \in \mathcal{D} \ \mbox{and} \ y \neq y' 
 \ \mbox{imply} \ \
 \frac{d_X ( \varphi (y, \eta), \varphi (y',\eta) )}{d_Y(y, y')}  \leq  L.
\end{equation}
\item
For any $\eta \in M$, the set $\mathcal{D}^{\eta} = \{ y \in Y : (y, \eta) \in \mathcal{D} \}$
is a union of finitely many disjoint clopen subsets $V^{\eta}_1, \ldots , V^{\eta}_{n_{\eta}}$ of $Y$ such that 
$$
 \mbox{$\psi^{\eta}$ is constant on $V^{\eta}_i$ \ for $i=1, \ldots , n_{\eta}$,}
$$
and 
\begin{equation}\label{eq:SetDist}
 d_Y ( V^{\eta}_i, V^{\eta}_j ) \geq r \qquad ( i \neq j ) .
\end{equation}
Here $r$ is a positive constant independent of $\eta$.
\end{enumerate}
Conversely, if $\mathcal{D}$, $\varphi$, $\psi$ are given as above, 
then $T$ defined by {\rm (\ref{eq:Form})} is a homomorphism from $\Lip (X, C(K))$ into $\Lip (Y, C(M))$.
Moreover, $T$ is unital if and only if $\mathcal{D} = Y \times M$.
\end{thm}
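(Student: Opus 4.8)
The plan is to study $T$ through the scalar functionals it induces. For each $(y,\eta)\in Y\times M$ put $\chi_{y,\eta}(f)=(Tf)(y,\eta)$; this is linear, and since $T$ is multiplicative and evaluation at $\eta$ on $C(M)$ is multiplicative, each $\chi_{y,\eta}$ is either $0$ or a character of $\Lip(X,C(K))$. The first and principal step is thus to identify the characters of $\Lip(X,C(K))$, and I would show that each is evaluation at a unique point of $X\times K$. My argument is the inverse-closedness one: let $\mathfrak{m}=\ker\chi$ be a maximal ideal and suppose it is contained in no $\ker\delta_{(x,\xi)}$. Then for each $(x,\xi)$ there is $f\in\mathfrak{m}$ with $f(x,\xi)\neq 0$, and by compactness of $X\times K$ finitely many of them, $f_1,\dots,f_N$, have no common zero. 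The function $F=\sum_i f_i\overline{f_i}$ then lies in $\mathfrak{m}$, is real and bounded below by a positive constant, and its reciprocal again belongs to $\Lip(X,C(K))$ (the Lipschitz estimate for $1/F$ comes from the positive lower bound of $F$); hence $F$ is invertible, contradicting that $\mathfrak{m}$ is proper. Uniqueness of the representing point follows because $\Lip(X,C(K))$ separates the points of $X\times K$.

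Because the point evaluations separate points of $Y\times M$, the algebra $\Lip(Y,C(M))$ is semisimple, so $T$ is automatically bounded; I write $\|T\|$ for its norm. I would then set $\mathcal{D}=\{(y,\eta):\chi_{y,\eta}\neq 0\}$. Since $1$ is idempotent, $T1=(T1)^2$ takes only the values $0$ and $1$, and being continuous it makes $\mathcal{D}=(T1)^{-1}(1)$ clopen; on $\mathcal{D}$ the character $\chi_{y,\eta}$ is evaluation at a unique point, which I name $(\varphi(y,\eta),\psi(y,\eta))$, yielding (\ref{eq:Form}). Continuity of $\varphi$ and $\psi$ follows from the continuity of $(y,\eta)\mapsto h(\varphi(y,\eta))=\bigl(T(h\otimes 1)\bigr)(y,\eta)$ and of $(y,\eta)\mapsto g(\psi(y,\eta))=\bigl(T(1\otimes g)\bigr)(y,\eta)$ for all $h\in\Lip(X)$, $g\in C(K)$ (here $h\otimes 1$ and $1\otimes g$ denote $(x,\xi)\mapsto h(x)$ and $(x,\xi)\mapsto g(\xi)$), combined with a compactness argument and the fact that these functions separate the points of $X$ and of $K$. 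For (i) I feed $h=d_X(\,\cdot\,,\varphi(y',\eta))$, whose $\Lip(X)$-norm is at most $\operatorname{diam}X+1$, into $\mathcal{L}_{Y,C(M)}(T(h\otimes 1))\le\|T\|\,\|h\|_{\Lip(X)}$, obtaining $L=\|T\|(\operatorname{diam}X+1)$. For (ii) I use, when $\psi(y,\eta)\neq\psi(y',\eta)$ with both points in $\mathcal{D}$, a Urysohn function $g\in C(K)$ of norm $1$ separating these two values; then $1=|g(\psi(y,\eta))-g(\psi(y',\eta))|\le\|T\|\,d_Y(y,y')$, so $d_Y(y,y')\ge 1/\|T\|=:r$. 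Hence the level sets of $\psi^\eta$ are $r$-separated, an $r$-separated subset of the compact space $Y$ is finite, so $\psi^\eta$ takes finitely many values, and its level sets are the required clopen pieces $V^\eta_i$.

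For the converse I verify that $T$ defined by (\ref{eq:Form}) maps into $\Lip(Y,C(M))$ and is a homomorphism. Linearity and multiplicativity are immediate from the pointwise formula, and $T$ is unital precisely when $T1\equiv 1$, i.e.\ when $\mathcal{D}=Y\times M$. Since $\mathcal{D}$ and its complement are both open, $Tf$ is jointly continuous, hence defines an element of $C(Y,C(M))$. The remaining task is the uniform Lipschitz bound in $y$, which I treat by cases: two points in the same $V^\eta_i$, where $\psi^\eta$ has constant value $k_i$, are controlled by (i) together with the inequality that the Lipschitz constant of $f^{k_i}$ is at most $\mathcal{L}_{X,C(K)}(f)$; two points in different pieces $V^\eta_i,V^\eta_j$ are $r$-separated by (ii); and a point of $\mathcal{D}^\eta$ against one outside it is handled by the lemma that I expect to be the main obstacle of the converse, namely that clopenness of $\mathcal{D}$ forces the uniform separation $\inf_\eta d_Y(\mathcal{D}^\eta,(\mathcal{D}^\eta)^c)>0$. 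This I would prove by contradiction: if $y_k,y'_k$ with $d_Y(y_k,y'_k)\to 0$ lay on opposite sides of $\mathcal{D}^{\eta_k}$, then passing to subnets with $y_k,y'_k\to y_\ast$ and $\eta_k\to\eta_\ast$ would force the continuous indicator of $\mathcal{D}$ to take both values at $(y_\ast,\eta_\ast)$, which is absurd. These three estimates, all uniform in $\eta$, together with $\|Tf\|_{C(Y,C(M))}\le\|f\|_{C(X,C(K))}$, give $Tf\in\Lip(Y,C(M))$.
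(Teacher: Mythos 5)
Your proposal is correct and follows essentially the same route as the paper: the clopen set $\mathcal{D}$ from the idempotent $T\mathbf{1}$, identification of the characters of $\Lip(X,C(K))$ with point evaluations on $X\times K$, the distance function $d_X(\,\cdot\,,\varphi(y',\eta))$ for (i), a norm-one Urysohn function yielding the uniform separation $r=1/\|T\|$ of the level sets of $\psi^{\eta}$ for (ii), and, for the converse, the same uniform-separation lemma between $\mathcal{D}$ and its complement together with the identical three-case Lipschitz estimate. The only deviations are cosmetic: you prove the character identification via inverse-closedness (the paper cites it from Oi), and you obtain continuity of $\varphi$ and $\psi$ from the separating families $h\otimes 1$ and $1\otimes g$ with a subnet argument, where the paper uses a single peaking function from its Proposition \ref{Prop:LipSep}.
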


In (\ref{eq:SetDist}), $d_Y (A, B)$ denotes the usual distance between two sets $A, B \subset Y$,
that is, $d_Y (A, B) = \inf \{ d_Y (y, y') : y \in A, y' \in B \}$.
(If $A = \emptyset$ or if $B = \emptyset$, then we set 
$d_Y (A, B) = \infty$).

\bigskip

Next we consider the following problem: 
$$
 \mbox{When is a homomorphism between Lipschitz algebras compact?}
$$
In \cite{KS}, H. Kamowitz and S. Scheinberg answered to this problem as follows:

\begin{thmA}[Kamowitz and Scheinberg, {\cite{KS}}]
Let $T$ be a unital homomorphism from $\Lip (X)$ into $\Lip (Y)$ 
described in Theorem A.
Then $T$ is compact if and only if 
$$
 \lim_{d_Y (y, y') \to 0} \frac{d_X ( \varphi (y), \varphi (y') )}{d_Y (y, y')}  = 0.
$$
\end{thmA}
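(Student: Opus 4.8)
The plan is to prove both implications by working directly with the composition formula $(Tf)(y)=f(\varphi(y))$ and the two-part norm $\|g\|_\infty+L(g)$ on $\Lip(Y)$, where $L(g)=\sup_{y\neq y'}|g(y)-g(y')|/d_Y(y,y')$ denotes the Lipschitz seminorm. Throughout I write $\tau(y,y')=d_X(\varphi(y),\varphi(y'))/d_Y(y,y')$ for the distortion ratio, so that the representation in Theorem A guarantees $\sup_{y\neq y'}\tau(y,y')=:L_0<\infty$, and I set $\Phi(\delta)=\sup\{\tau(y,y'):0<d_Y(y,y')\le\delta\}$. The condition to be characterized is precisely $\Phi(\delta)\to 0$ as $\delta\to 0$.

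For sufficiency I would start with an arbitrary bounded sequence $(f_n)$ in $\Lip(X)$ and show that $(Tf_n)$ has a subsequence convergent in $\Lip(Y)$. Since the $f_n$ are uniformly bounded and share a common Lipschitz bound, they are equicontinuous, so Arz\'ela--Ascoli yields a subsequence (still denoted $f_n$) converging uniformly on $X$ to some $f$, which is again Lipschitz; consequently $Tf_n\to Tf$ uniformly on $Y$. It remains to upgrade this to convergence in the Lipschitz seminorm, and here is the crux: for $g=f_n-f_m$ I split the quotient $|g(\varphi(y))-g(\varphi(y'))|/d_Y(y,y')$ according to whether $d_Y(y,y')\le\delta$ or $d_Y(y,y')>\delta$. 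On the small-scale region the quotient equals $\bigl(|g(\varphi(y))-g(\varphi(y'))|/d_X(\varphi(y),\varphi(y'))\bigr)\,\tau(y,y')\le L(g)\,\Phi(\delta)$, which is small because $L(g)$ is uniformly bounded while $\Phi(\delta)\to 0$; on the large-scale region the quotient is at most $2\|Tf_n-Tf_m\|_\infty/\delta$, which tends to $0$ by the uniform convergence. An $\varepsilon/2$ argument (fix $\delta$ first, then take $n,m$ large) shows that $(Tf_n)$ is Cauchy in $\Lip(Y)$, and hence $T$ is compact.

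For necessity I argue by contraposition: if $\Phi(\delta)\not\to 0$, then since $\Phi$ is monotone there is $c>0$ together with pairs $y_n\neq y_n'$ with $d_Y(y_n,y_n')\to 0$ yet $\tau(y_n,y_n')\ge c$. Writing $a_n=\varphi(y_n)$ and $b_n=\varphi(y_n')$, I would take the peak functions $f_n(x)=\max\{0,\,d_X(a_n,b_n)-d_X(x,a_n)\}$, which satisfy $L(f_n)\le 1$ and $\|f_n\|_\infty=d_X(a_n,b_n)$, so $(f_n)$ is bounded in $\Lip(X)$. By construction $(Tf_n)(y_n)-(Tf_n)(y_n')=d_X(a_n,b_n)$, giving $L(Tf_n)\ge\tau(y_n,y_n')\ge c$; on the other hand the finite global distortion forces $d_X(a_n,b_n)\le L_0\,d_Y(y_n,y_n')\to 0$, so that $Tf_n\to 0$ uniformly. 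Since $L(\cdot)$ is continuous for the Lipschitz norm, no subsequence of $(Tf_n)$ can converge in $\Lip(Y)$, and therefore $T$ is not compact.

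The main obstacle is the sufficiency argument, and in particular the two-scale decomposition of the Lipschitz seminorm: the whole point is that the small-scale part is exactly where the hypothesis $\Phi(\delta)\to 0$ does the work, while the large-scale part becomes harmless once $\delta$ is fixed because it is dominated by the already-established uniform convergence. One must also check that the Arz\'ela--Ascoli limit $f$ is genuinely Lipschitz, so that $Tf$ lies in $\Lip(Y)$ and the limit is taken in the correct space; this is automatic, since a uniform limit of functions sharing a common Lipschitz constant retains that constant.
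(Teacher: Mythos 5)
Your proposal is correct, and it is worth noting at the outset that the paper never proves this theorem directly: it is cited from Kamowitz--Scheinberg and recovered as the scalar specialization (take $K$ and $M$ to be singletons) of Theorem \ref{Thm2}, via Corollary 1(II). Measured against that machinery, your sufficiency argument shares its engine with the paper's Lemma \ref{Lc}: the two-scale splitting of the Lipschitz quotient at a threshold $\delta$, with the hypothesis $\Phi(\delta)\to 0$ controlling the small scales and $2\|Tf_n-Tf_m\|_\infty/\delta$ the large ones, applied to differences exactly as the paper does in Lemma \ref{Ld}. Where you genuinely diverge is in how the uniformly convergent subsequence is produced: you apply Arzel\'a--Ascoli on the \emph{domain} side, using that the unit ball of $\Lip(X)$ is bounded and equicontinuous in $C(X)$, whereas the paper applies it on the \emph{image} side, to $T\bigl(\mathbb{B}_{\Lip(X,C(K))}\bigr)$ in $C(Y\times M)$ (Lemmas \ref{La}--\ref{Lb}). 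In the scalar case your route is simpler, but it is scalar-specific: in the vector-valued setting the unit ball of $\Lip(X,C(K))$ has no equicontinuity in the $K$-variable, which is precisely why the paper needs condition (iv) and Lemma \ref{La} to push Arzel\'a--Ascoli through on the image side. Your necessity argument has the same skeleton as the paper's (a bounded test family concentrated at the bad pairs, sup norms forced to zero so any limit of $T\hat f_n$ vanishes, while the seminorm stays bounded below by $c$), but your piecewise-linear peak functions $f_n(x)=\max\{0,\,d_X(a_n,b_n)-d_X(x,a_n)\}$ are cleaner than the paper's smoothed family $\chi_n\bigl(d_X(x,z_n')\bigr)$ with $\chi_n(t)=\tfrac{1}{2n}(1-e^{-nt})$: they realize the full difference $d_X(a_n,b_n)$ exactly, get $\|f_n\|_\infty\le L_0\,d_Y(y_n,y_n')\to 0$ for free from the global distortion bound, and avoid both the mean value theorem and the paper's auxiliary choice $d_Y(y_n,y_n')<1/n^2$ needed to force $n\sigma_n\to 0$. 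Two cosmetic points: in the small-scale estimate your factorization through $d_X(\varphi(y),\varphi(y'))$ divides by zero when $\varphi(y)=\varphi(y')$, so state the bound as $|g(\varphi(y))-g(\varphi(y'))|\le L(g)\,d_X(\varphi(y),\varphi(y'))\le L(g)\,\Phi(\delta)\,d_Y(y,y')$ instead; and if $Y$ is uniformly discrete then $\Phi(\delta)$ is a supremum over an empty set for small $\delta$, which you should dispose of by convention (the large-scale estimate then covers everything). Neither affects correctness.
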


In this paper, we give a necessary and sufficient condition 
for $T$ in Theorem \ref{Thm1} to be compact.

\begin{thm}\label{Thm2}
Let $X$, $Y$, $K$, $M$ be as in Theorem \ref{Thm1}.
Suppose that $T$ is a homomorphism from $\Lip (X, C(K))$ into $\Lip (Y, C(M))$ 
with form {\rm (\ref{eq:Form})} in Theorem \ref{Thm1}. 
Then $T$ is compact if and only if {\rm (iii)} and {\rm (iv)} hold.
\begin{enumerate}
\renewcommand{\labelenumi}{(\roman{enumi})}
\setcounter{enumi}{2}
\item
For any $\varepsilon > 0$, 
there exists $\delta > 0$ such that 
\begin{equation}\label{}
 (y, \eta), (y', \eta) \in \mathcal{D} \ \mbox{and} \ 
 0 < d_Y (y, y') < \delta \ \mbox{imply} \ \  
 \frac{d_X ( \varphi (y, \eta), \varphi (y', \eta) )}{d_Y(y, y')} 
 < \varepsilon.
\end{equation}
\item
For any $y \in Y$, the set $\mathcal{D}_y = \{ \eta \in M : (y, \eta) \in \mathcal{D} \}$
is a union of finitely many disjoint clopen sets $\Omega_y^1, \ldots , \Omega_y ^{n_y}$ such that 
$$
 \mbox{$\psi_y$ is constant on $\Omega_y^i$
 \ for $i=1, \ldots , n_y$}.
$$
\end{enumerate}
\end{thm}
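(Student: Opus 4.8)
The plan is to prove both implications, and the whole argument rests on two separation facts already packaged into Theorem~\ref{Thm1}. First, property~(ii) together with the uniform bound $r$ in \eqref{eq:SetDist} gives: if $(y,\eta),(y',\eta)\in\mathcal{D}$ and $d_Y(y,y')<r$, then $y,y'$ lie in a common sheet $V_i^{\eta}$, so $\psi(y,\eta)=\psi(y',\eta)$; thus $\psi$ is constant in $y$ on every $r$-ball, \emph{uniformly} in $\eta$. Second, since $\mathcal{D}$ is clopen in the compact space $Y\times M$, a subnet argument produces a single $\rho>0$ with $d_Y(\mathcal{D}^{\eta},Y\setminus\mathcal{D}^{\eta})\ge\rho$ for all $\eta$; hence a ``straddling'' pair $(y,\eta)\in\mathcal{D}$, $(y',\eta)\notin\mathcal{D}$ forces $d_Y(y,y')\ge\rho$. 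I would record these first. For sufficiency, take $f_n$ with $\|f_n\|\le1$ and seek a subsequence with $\{Tf_n\}$ Cauchy. The opening step is uniform convergence: using the two separation facts with (iv), I show $\psi$ is \emph{locally constant} on $\mathcal{D}$. Given $(y_0,\eta_0)\in\mathcal{D}$ with $\xi_0=\psi(y_0,\eta_0)$, (iv) supplies a clopen $W\ni\eta_0$ in $\mathcal{D}_{y_0}$ on which $\psi(y_0,\cdot)\equiv\xi_0$; then for $y\in B(y_0,r)$ and $\eta\in W$ with $(y,\eta)\in\mathcal{D}$ the first fact yields $\psi(y,\eta)=\psi(y_0,\eta)=\xi_0$. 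Since $\varphi$ is continuous and $\mathcal{L}_{X,C(K)}(f_n)\le1$, the estimate $|(Tf_n)(y,\eta)-(Tf_n)(y_0,\eta_0)|\le d_X(\varphi(y,\eta),\varphi(y_0,\eta_0))$ holds on a neighborhood, uniformly in $n$; this is equicontinuity of $\{Tf_n\}$ on $Y\times M$, so Arzel\`a--Ascoli gives a uniformly convergent subsequence.

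The second step upgrades to the Lipschitz norm, and here (iii) enters. For $F=f_n-f_m$ I estimate $\mathcal{L}_{Y,C(M)}(TF)=\sup_{y\ne y'}\sup_{\eta}|(TF)(y,\eta)-(TF)(y',\eta)|/d_Y(y,y')$. Fixing $\varepsilon>0$, choose $\delta\le\min(r,\rho)$ from (iii). For $d_Y(y,y')\ge\delta$ the quotient is at most $2\|TF\|_{\infty}/\delta\to0$ along the subsequence. For $0<d_Y(y,y')<\delta$ the uniform clopen separation excludes straddling pairs, and when both points lie in $\mathcal{D}^{\eta}$ the first separation fact gives a common value $\xi$, whence $|(TF)(y,\eta)-(TF)(y',\eta)|=|F^{\xi}(\varphi(y,\eta))-F^{\xi}(\varphi(y',\eta))|\le\mathcal{L}_X(F^{\xi})\,d_X(\varphi(y,\eta),\varphi(y',\eta))\le 2\varepsilon\,d_Y(y,y')$, using $\mathcal{L}_X(F^{\xi})\le\|F\|\le2$ and (iii). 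Sending $n,m\to\infty$ and then $\varepsilon\to0$ forces $\mathcal{L}_{Y,C(M)}(Tf_n-Tf_m)\to0$, so $\{Tf_n\}$ is Cauchy and $T$ is compact.

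For necessity I argue each condition by contradiction. If (iv) fails at $y_0$, then $\psi_{y_0}$ is not locally constant, and since finite range would force open (hence clopen) fibers, $\psi_{y_0}$ takes infinitely many values on the compact $\mathcal{D}_{y_0}$; an infinite subset of a compact Hausdorff space contains an infinite relatively discrete subset, so I pick $\eta_k$ with $\psi(y_0,\eta_k)=\xi_k$ distinct and $\xi_k\notin\overline{\{\xi_j:j\ne k\}}$, and use Urysohn to get $h_k\in C(K)$ with $h_k(\xi_k)=1$, $h_k(\xi_j)=0$ $(j\ne k)$, $\|h_k\|\le1$. Then $f_k(x,\xi)=h_k(\xi)$ has $\|f_k\|\le1$ and $\|Tf_k-Tf_l\|_{\infty}\ge|(Tf_k-Tf_l)(y_0,\eta_k)|=1$, so $\{Tf_k\}$ has no Cauchy subsequence. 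If (iii) fails, there are $\varepsilon_0>0$ and $(y_k,\eta_k),(y_k',\eta_k)\in\mathcal{D}$ with $d_Y(y_k,y_k')\to0$ and $r_k:=d_X(\varphi(y_k,\eta_k),\varphi(y_k',\eta_k))\ge\varepsilon_0\,d_Y(y_k,y_k')$; the honest test function $d_X(\cdot,b)$ shows $r_k\to0$ is forced, since otherwise $\mathcal{L}_{Y,C(M)}(Tf)=\infty$, contradicting $Tf\in\Lip(Y,C(M))$. I then set $g_k(x)=\max(0,r_k-d_X(x,\varphi(y_k,\eta_k)))$ and $f_k(x,\xi)=g_k(x)$: these satisfy $\|f_k\|\le r_k+1$, $\|Tf_k\|_{\infty}\le r_k\to0$, yet $\mathcal{L}_{Y,C(M)}(Tf_k)\ge r_k/d_Y(y_k,y_k')\ge\varepsilon_0$. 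A $\Lip$-convergent subsequence would have uniform limit $0$ while keeping Lipschitz constant $\ge\varepsilon_0$, which is impossible; hence $T$ is not compact.

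The step I expect to be most delicate is the equicontinuity in the first paragraph, for this is where the $C(K)$-valued setting bites: functions in $\Lip(X,C(K))$ carry \emph{no} uniform modulus of continuity in the $K$-variable, so the term built from $\psi$ cannot be tamed by continuity of $\psi$ alone and must be made to \emph{vanish}. This requires proving that $\psi$ is genuinely locally constant on $\mathcal{D}$, which is not a formal consequence of separate local constancy in $y$ and in $\eta$; the patching works only because the $y$-local-constancy coming from \eqref{eq:SetDist} is uniform in $\eta$ on a fixed $r$-ball and so combines with the single clopen set furnished by (iv). I would therefore establish this local constancy, together with the uniform clopen separation $\rho$ that eliminates straddling pairs in the Lipschitz estimate, carefully before running the two main steps.
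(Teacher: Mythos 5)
Your proposal is correct, and while your sufficiency proof follows the paper's route, both of your necessity arguments are genuinely different. For sufficiency the paper does exactly what you do --- Arzel\'{a}--Ascoli in $C(Y\times M)$ (Lemmas \ref{La} and \ref{Lb}) followed by the inequality $\| Tf \|_{\Lip (Y, C(M))} \leq \varepsilon + c_{\varepsilon} \| Tf \|_{C(Y \times M)}$ on the unit ball (Lemma \ref{Lc}), with the same case analysis via $r$ from (\ref{eq:SetDist}) and $\rho$ from Lemma \ref{L1} --- except at the point you flagged as delicate: where you prove that $\psi$ is \emph{jointly} locally constant on $\mathcal{D}$ by patching the clopen set from (iv) with the $\eta$-uniform radius $r$, the paper avoids joint constancy altogether, splitting the increment as $(y,\eta)\to(y_0,\eta)\to(y_0,\eta_0)$ and killing the $y$-leg with the crude bound $\mathcal{L}_{Y, C(M)}(Tf) \leq \| Tf \|_{\Lip (Y, C(M))} \leq \| T \|$, so that only the $\eta$-leg needs (iv); your variant is equally valid, dispenses with $\| T \|$ there, and your remark that the patching works only because the radius $r$ is uniform in $\eta$ is exactly right. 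For the necessity of (iii), the paper also tests on functions of $x$ alone, but it flattens the distance function exponentially, taking $f_n(x)=\chi_n\bigl(d_X(x,z'_n)\bigr)$ with $\chi_n(t)=(1-e^{-nt})/(2n)$, invoking the mean value theorem, and requiring $d_Y(y_n,y'_n)<1/n^2$ so that $n\sigma_n\to 0$, which yields the lower bound $\varepsilon_0/2$; your truncated cones $g_k(x)=\max\bigl(0,\, r_k - d_X(x,\varphi(y_k,\eta_k))\bigr)$ realize the full displacement $r_k$ across a pair at distance exactly $r_k$, need no rate assumption on $d_Y(y_k,y'_k)$, and give the cleaner bound $\varepsilon_0$ --- note also that $r_k\to 0$ is immediate from (\ref{eq:Phi}), which is part of the standing hypotheses, so your detour through a test function to force $r_k\to 0$ is unnecessary. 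For the necessity of (iv), the paper composes $T$ with the restriction $P:\Lip (Y, C(M))\rightarrow C(\mathcal{D}_y)$, $(Pg)(\eta)=g(y,\eta)$, and uses equicontinuity of the image of the compact operator $PT$ to show $\psi_y$ is locally constant (Lemma \ref{Lx}), deducing the finite decomposition by compactness of $\mathcal{D}_y$ (Lemma \ref{Ly}); you instead observe that (iv) is equivalent to finiteness of the range of $\psi_y$ and, when the range is infinite, build a $1$-separated sequence $T\tilde{h}_k$, where $\tilde{h}_k(x,\xi)=h_k(\xi)$, $h_k(\xi_k)=1$ and $h_k\equiv 0$ on $\overline{\{\xi_j : j\neq k\}}$, from a relatively discrete set of values. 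Your witness is more direct and quantitatively transparent, but it leans on the fact that every infinite Hausdorff space contains an infinite discrete subspace --- true, but nontrivial enough to deserve a citation or a short proof --- whereas the paper's equicontinuity route sidesteps any such extraction.
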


\section{Preliminaries}

As mentioned in Introduction, 
we consider a function $f$ in $\Lip (X, C(K))$ as a function on $X \times K$.

\begin{prop}\label{Prop:LipFn}
Let $f$ be a complex-valued function on $X \times K$.
Then $f \in \Lip (X, C(K))$ if and only if 
$f \in C(X \times K)$ and 
\begin{equation}\label{eq:LipFn}
 \mathcal{L}_{X, C(K)} (f) = 
 \sup _{x, x' \in X \atop x \neq x'} \frac{\| f_x - f_{x'} \|_{C(K)}}{d_X (x, x')} < \infty.
\end{equation}
Moreover, $\| f \|_{C(X, C(K))} = \| f \|_{C(X \times K)}$.
\end{prop}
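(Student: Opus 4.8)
The plan is to treat the three assertions separately, beginning with the norm identity, which needs no work, then the easy ``if'' direction, and finally the ``only if'' direction, where the sole difficulty is establishing joint continuity on $X \times K$. At the outset I would record two bookkeeping observations. The norm equality is immediate from the definitions of the two supremum norms:
$$
 \| f \|_{C(X, C(K))} = \sup_{x \in X} \| f_x \|_{C(K)} = \sup_{x \in X} \sup_{\xi \in K} |f(x, \xi)| = \sup_{(x, \xi) \in X \times K} |f(x, \xi)| = \| f \|_{C(X \times K)},
$$
so it holds for any $f$ for which both sides make sense, independently of the equivalence. Moreover, the quantity $\mathcal{L}_{X, C(K)}(f)$ in (\ref{eq:LipFn}) is \emph{identical} to the Lipschitz constant occurring in the definition of $\Lip(X, C(K))$, since $\| f(x) - f(x') \|_{C(K)} = \sup_{\xi \in K} |f(x, \xi) - f(x', \xi)| = \| f_x - f_{x'} \|_{C(K)}$. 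Thus the finiteness of (\ref{eq:LipFn}) is automatically equivalent to the Lipschitz bound, and the only real content of the proposition is the relation between membership in $\Lip(X, C(K))$ and joint continuity.

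For the ``if'' direction, suppose $f \in C(X \times K)$ and that (\ref{eq:LipFn}) is finite. Fixing $x \in X$, the map $\xi \mapsto f(x, \xi)$ is continuous on $K$, so $f_x \in C(K)$ and $x \mapsto f_x$ is a well-defined $C(K)$-valued function on $X$; its Lipschitz constant equals the finite quantity (\ref{eq:LipFn}), so it lies in $\Lip(X, C(K))$.

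The ``only if'' direction is where I expect the main work. Suppose $f \in \Lip(X, C(K))$. Then each $f_x = f(x)$ belongs to $C(K)$, the map $x \mapsto f(x)$ is Lipschitz (hence continuous) from $X$ into $C(K)$, and its finite Lipschitz constant is exactly (\ref{eq:LipFn}) by the identification above. It remains only to prove $f \in C(X \times K)$. I would fix $(x_0, \xi_0) \in X \times K$ and split
$$
 |f(x, \xi) - f(x_0, \xi_0)| \leq |f(x, \xi) - f(x_0, \xi)| + |f(x_0, \xi) - f(x_0, \xi_0)|.
$$
The first term is bounded \emph{uniformly in} $\xi$ by the Lipschitz estimate $|f(x, \xi) - f(x_0, \xi)| \leq \| f_x - f_{x_0} \|_{C(K)} \leq \mathcal{L}_{X, C(K)}(f)\, d_X(x, x_0)$, so it tends to $0$ as $x \to x_0$ regardless of $\xi$; the second term tends to $0$ as $\xi \to \xi_0$ because $f_{x_0} \in C(K)$. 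Combining the two estimates gives continuity of $f$ at the arbitrary point $(x_0, \xi_0)$, whence $f \in C(X \times K)$.

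The crux of the whole argument is precisely this uniform-in-$\xi$ control in the ``only if'' direction: separate continuity in $x$ and in $\xi$ does not by itself yield joint continuity, and it is the \emph{Lipschitz} (rather than merely continuous) dependence on $x$ that makes the $x$-increment small uniformly over $K$ and thereby upgrades separate continuity to joint continuity. Everything else reduces to rewriting the same supremum in two different orders.
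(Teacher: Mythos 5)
Your proof is correct and complete; the paper itself only declares this proposition ``Straightforward'' and gives no argument, and what you have written is precisely the standard elaboration the authors intend --- identifying $\mathcal{L}_{X,C(K)}(f)$ with the Lipschitz constant of $x \mapsto f_x$, reading off the norm identity by exchanging suprema, and upgrading separate to joint continuity via the uniform-in-$\xi$ Lipschitz estimate $|f(x,\xi)-f(x_0,\xi)| \leq \mathcal{L}_{X,C(K)}(f)\, d_X(x,x_0)$. You also correctly flag the one genuinely non-formal point, namely that mere separate continuity would not suffice, so nothing is missing.
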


\begin{proof}
Straightforward.
\end{proof}

The next proposition implies that $\Lip (X, C(K))$ separates the points of $X \times K$.

\begin{prop}\label{Prop:LipSep}
For any $(x_0, \xi_0) \in X \times K$ and for any open neighborhood $\mathcal{U}$ of $(x_0, \xi_0)$, 
there exists a $f \in \Lip (X, C(K))$ such that 
$0 \leq f \leq 1$, $f( x_0, \xi_0)= 1$ and 
$f(x, \xi) \leq m<1$ for all $(x, \xi ) \in (X \times K) \smallsetminus \mathcal{U}$.
\end{prop}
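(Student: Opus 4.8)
The plan is to reduce the arbitrary neighborhood $\mathcal{U}$ to a product (rectangular) neighborhood and then build $f$ as a product of a Lipschitz ``tent'' on $X$ and a Urysohn function on $K$. Since the product topology on $X \times K$ has a base consisting of rectangles, I would first choose open sets $U \subseteq X$ and $W \subseteq K$ with $(x_0, \xi_0) \in U \times W \subseteq \mathcal{U}$. Because $X$ is a metric space, there is some $\rho > 0$ with the open ball $B(x_0, \rho) \subseteq U$, so that $B(x_0, \rho) \times W \subseteq \mathcal{U}$.

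Next I would define the two factors separately. On $X$, set $g(x) = \max\{ 0, 1 - \rho^{-1} d_X(x, x_0) \}$; this is a Lipschitz function with $0 \leq g \leq 1$, $g(x_0) = 1$, Lipschitz constant $\mathcal{L}_{X, \mathbb{C}}(g) \leq \rho^{-1}$, and $g \equiv 0$ outside $B(x_0, \rho)$. On $K$, note that $\{ \xi_0 \}$ and $K \smallsetminus W$ are disjoint closed subsets of the compact Hausdorff (hence normal) space $K$, so Urysohn's lemma yields a continuous $h : K \rightarrow [0,1]$ with $h(\xi_0) = 1$ and $h \equiv 0$ on $K \smallsetminus W$. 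I then put $f(x, \xi) = g(x)\, h(\xi)$. Clearly $0 \leq f \leq 1$, $f(x_0, \xi_0) = 1$, and $f \in C(X \times K)$ as a product of continuous functions. For the uniform Lipschitz bound, for $x \neq x'$ I have
$$
 \| f_x - f_{x'} \|_{C(K)} = \sup_{\xi \in K} |g(x) - g(x')|\, |h(\xi)| \leq |g(x) - g(x')| ,
$$
whence $\mathcal{L}_{X, C(K)}(f) \leq \mathcal{L}_{X, \mathbb{C}}(g) \leq \rho^{-1} < \infty$, and Proposition~\ref{Prop:LipFn} gives $f \in \Lip(X, C(K))$. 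Finally, if $(x, \xi) \notin \mathcal{U}$, then $(x, \xi) \notin B(x_0, \rho) \times W$, so either $d_X(x, x_0) \geq \rho$, forcing $g(x) = 0$, or $\xi \notin W$, forcing $h(\xi) = 0$; in either case $f(x, \xi) = 0$. Thus $f$ vanishes off $\mathcal{U}$ and the desired inequality holds even with $m = 0 < 1$.

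The construction is essentially routine, and I do not expect a serious obstacle. The one point requiring care is that $K$ carries no metric, so the localization in the $\xi$-variable must come from Urysohn's lemma rather than from a distance function. Correspondingly, the uniform-in-$\xi$ Lipschitz estimate in \eqref{eq:LipFn} is exactly what motivates the separated form $f = g \cdot h$: the entire Lipschitz dependence on $x$ is then carried by the scalar factor $g$ and controlled by its single Lipschitz constant, while the bounded factor $h$ does not affect the estimate.
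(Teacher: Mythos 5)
Your proof is correct and takes essentially the same route as the paper: reduce $\mathcal{U}$ to a rectangle, then take $f = g \cdot h$ with a distance-based Lipschitz factor on $X$ and a Urysohn factor on $K$, the Lipschitz estimate $\mathcal{L}_{X, C(K)}(f) \leq \mathcal{L}_{X,\mathbb{C}}(g)$ being the common key point. The only difference is your choice of the truncated tent $g(x) = \max\{0, 1 - \rho^{-1} d_X(x,x_0)\}$, which vanishes off a ball inside $U$ and hence yields the slightly stronger conclusion $m = 0$, whereas the paper uses the globally positive cone $h(x) = 1 - d_X(x,x_0)/\operatorname{diam}(X)$ and must invoke compactness of $(X \times K) \smallsetminus \mathcal{U}$ to obtain $m < 1$.
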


\begin{proof}
Let $(x_0, \xi_0) \in X \times K$ and let $\mathcal{U}$ be an 
open neighborhood of $(x_0, \xi_0)$.
Then there exist an open neighborhood $U$ of $x_0$ in $X$ and an open neighborhood $\Theta$ of $\xi_0$ in $K$ such that $(x_0, \xi_0) \in U \times \Theta \subset \mathcal{U}$.

Let $h$ be a function on $X$ defined by 
$$
 h(x) = 1 - \frac{d_X (x, x_0)}{\operatorname{diam} (X)} \qquad (x \in X),
$$
where $\operatorname{diam} (X) = \sup \{ d_X (x, x') : x, x' \in X \}$.
We easily see that $h \in \Lip (X)$, 
$0 \leq h \leq 1$, $h(x_0) = 1$ and $h(x) < 1$ for all $x \in X \smallsetminus \{ x_0 \}$.
By Urysohn's lemma, there is a $u \in C(K)$ such that 
$0 \leq u \leq 1$, $u( \xi_0 ) = 1$ and $u( \xi ) = 0$ for all $\xi \in K \smallsetminus \Theta$.
Now, put $f(x, \xi) = h(x) u(\xi)$ for $(x, \xi) \in X \times K$.
Then we can verify that $f$ has the desired properties.
Here $m$ may be taken as the maximum of $f$ on the compact set $( X \times K ) \smallsetminus \mathcal{U}$.
\end{proof}

Here we summarize a fundamental fact on the Banach algebra $\Lip (X, C(K))$.

\begin{prop}\label{Prop:MIS}
$\Lip (X, C(K))$ is a semi-simple unital commutative Banach algebra and 
its maximal ideal space is identified with $X \times K$.
In fact, for any multiplicative linear functional $\Psi$ on $\Lip (X, C(K))$, 
there exists a unique point $(x, \xi) \in X \times K$ such that 
$\Psi (f) = f(x, \xi)$ for all $f \in \Lip (X, C(K))$.
\end{prop}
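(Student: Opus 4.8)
The plan is to verify the algebraic properties almost by definition and then to identify the characters. Viewing each $f$ as a function on $X \times K$ (Proposition \ref{Prop:LipFn}), the product is taken pointwise, so $\Lip(X, C(K))$ is commutative and the constant function $\mathbf{1} \equiv 1$ is its unit. Being a commutative unital Banach algebra, its maximal ideal space coincides with the set of nonzero multiplicative linear functionals (characters), and every maximal ideal is the kernel of a unique character, by standard Gelfand theory. For each $(x, \xi) \in X \times K$, Proposition \ref{Prop:LipFn} shows that the evaluation $\delta_{(x,\xi)} : f \mapsto f(x, \xi)$ is a well-defined character, and Proposition \ref{Prop:LipSep} shows that distinct points yield distinct characters. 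It therefore remains only to prove the converse: that every character is such an evaluation.

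This is the key step. Given a character $\Psi$ with maximal ideal $M = \ker \Psi$, I claim there is a point $(x_0, \xi_0)$ at which every $f \in M$ vanishes, and I would argue by contradiction. If no such point existed, then for each $(x, \xi)$ there would be some $f \in M$ with $f(x, \xi) \neq 0$; by continuity $|f|^2 = f \overline{f}$ is strictly positive on a neighborhood, and by compactness of $X \times K$ finitely many of these neighborhoods cover the space, producing $f_1, \dots, f_n \in M$ with $g = \sum_{i=1}^{n} f_i \overline{f_i} \geq c > 0$ everywhere. Here I use that the algebra is self-adjoint: $\overline{f} \in \Lip(X, C(K))$ with $\mathcal{L}_{X, C(K)}(\overline{f}) = \mathcal{L}_{X, C(K)}(f)$, so each $f_i \overline{f_i}$, and hence $g$, lies in the ideal $M$. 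The crux is to show $g$ is invertible in $\Lip(X, C(K))$: since $g$ is continuous and $\geq c$ on the compact space $X \times K$, the reciprocal $1/g$ is continuous, and the estimate $\| (1/g)_x - (1/g)_{x'} \|_{C(K)} \leq c^{-2} \| g_x - g_{x'} \|_{C(K)}$ shows $1/g$ is Lipschitz; thus $1/g \in \Lip(X, C(K))$ by Proposition \ref{Prop:LipFn}. But an invertible element cannot lie in the proper ideal $M$, a contradiction.

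Consequently such a point $(x_0, \xi_0)$ exists, giving $M \subseteq \ker \delta_{(x_0, \xi_0)}$. Since $M$ is maximal and $\ker \delta_{(x_0, \xi_0)}$ is a proper ideal containing it, the two coincide, whence $\Psi = \delta_{(x_0, \xi_0)}$; uniqueness of the point is exactly the separation property of Proposition \ref{Prop:LipSep}. This establishes the bijection between the maximal ideal space and $X \times K$, and one checks routinely that it is a homeomorphism, the Gelfand topology restricting to the topology of $X \times K$ because each Gelfand transform $\hat{f}$ is continuous. Finally, semi-simplicity is immediate: the radical is $\bigcap_{(x,\xi)} \ker \delta_{(x,\xi)} = \{ f : f(x, \xi) = 0 \text{ for all } (x, \xi) \in X \times K \} = \{ 0 \}$. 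The main obstacle in the argument is the invertibility step, namely confirming that the reciprocal of a Lipschitz function bounded away from zero remains in $\Lip(X, C(K))$; the rest is standard Gelfand theory.
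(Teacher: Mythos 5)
Your proof is correct, and it is precisely the ``well-known argument'' the paper invokes: the paper itself gives no proof of Proposition \ref{Prop:MIS}, deferring to \cite{Oi}, and your reconstruction --- evaluations are characters separated by Proposition \ref{Prop:LipSep}, plus the compactness common-zero argument resting on self-adjointness ($\overline{f} \in \Lip(X,C(K))$ with the same Lipschitz constant) and inverse-closedness (the estimate $\| (1/g)_x - (1/g)_{x'} \|_{C(K)} \leq c^{-2} \| g_x - g_{x'} \|_{C(K)}$ for $g \geq c > 0$) --- is exactly the standard route, with the two nonroutine verifications correctly identified and carried out. Nothing is missing; even the homeomorphism remark is justified, since $(x,\xi) \mapsto \delta_{(x,\xi)}$ is a continuous bijection from the compact space $X \times K$ onto the Hausdorff maximal ideal space.
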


We can prove this proposition by the well-known argument in theory of Banach algebras.
The details may be found in \cite[Propositions 11 and 12]{Oi}.

\section{Proof of Theorem \ref{Thm1}}

In this section we prove Theorem \ref{Thm1}.

\subsection{{\sc Proof of Sufficiency}}

We first settle the converse statement.
Suppose that $\mathcal{D}$ is a clopen subset of $Y \times M$, 
that $\varphi : \mathcal{D} \rightarrow X$ and $\psi : \mathcal{D} \rightarrow K$ are continuous mappings with (i) and (ii), 
and that $T$ is defined by (\ref{eq:Form}).

\begin{lem}\label{L1}
If 
$$
 \rho = 
\inf \bigl\{ d_Y (y, y') \: : \:
 \mbox{$(y, \eta) \in \mathcal{D}$ and $(y', \eta) \in (Y \times M) \smallsetminus \mathcal{D}$ for some $\eta \in M$} \bigr\}, 
$$
then $\rho > 0$.
\end{lem}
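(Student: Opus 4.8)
The plan is to argue by contradiction, using only the compactness of the ambient product space together with the fact that $\mathcal{D}$ is clopen; conditions (i) and (ii) will not actually be needed for this lemma. Suppose instead that $\rho = 0$. Then for each $n$ I can choose $\eta_n \in M$ and $y_n, y_n' \in Y$ with $(y_n, \eta_n) \in \mathcal{D}$, with $(y_n', \eta_n) \in (Y \times M) \smallsetminus \mathcal{D}$, and with $d_Y(y_n, y_n') < 1/n$. The goal is to manufacture a single point of $Y \times M$ that lies both in $\mathcal{D}$ and in its complement.

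Next I would pass to a convergent subnet. The product $M \times Y \times Y$ is compact, being a product of the compact spaces $M$, $Y$, $Y$, so the sequence of triples $(\eta_n, y_n, y_n')$ admits a subnet $(\eta_\alpha, y_\alpha, y_\alpha')$ converging to some $(\eta_0, y_0, y_0') \in M \times Y \times Y$. Because a subnet of a sequence is indexed cofinally in $\mathbb{N}$, the relevant indices are eventually arbitrarily large, so the estimate $d_Y(y_\alpha, y_\alpha') \to 0$ persists along the subnet; as $Y$ is a metric space, this forces $y_0 = y_0'$, and I write $y_0$ for this common limit.

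Now I would exploit the clopenness of $\mathcal{D}$. On the one hand, $(y_\alpha, \eta_\alpha) \to (y_0, \eta_0)$ in $Y \times M$, and each term lies in the closed set $\mathcal{D}$, so the limit satisfies $(y_0, \eta_0) \in \mathcal{D}$. On the other hand, $(y_\alpha', \eta_\alpha) \to (y_0', \eta_0) = (y_0, \eta_0)$, and each term lies in $(Y \times M) \smallsetminus \mathcal{D}$, which is likewise closed precisely because $\mathcal{D}$ is open; hence $(y_0, \eta_0) \in (Y \times M) \smallsetminus \mathcal{D}$. These two memberships are incompatible, so the assumption $\rho = 0$ is untenable and therefore $\rho > 0$.

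The main obstacle is purely topological rather than computational: since $M$ need not be metrizable, $Y \times M$ may fail to be sequentially compact, so I cannot simply extract a convergent subsequence and must work with subnets throughout. The one subtle point is to verify that the defining property $d_Y(y_\alpha, y_\alpha') \to 0$ survives the passage to the subnet, which is exactly what the cofinality in the definition of a subnet guarantees; everything else is a direct application of the closedness of $\mathcal{D}$ and of its complement.
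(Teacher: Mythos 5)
Your proof is correct and takes essentially the same route as the paper's: both argue by contradiction, pick $(y_n,\eta_n)\in\mathcal{D}$ and $(y_n',\eta_n)\in(Y\times M)\smallsetminus\mathcal{D}$ with $d_Y(y_n,y_n')<1/n$, extract a convergent subnet (the paper directly from compactness of $\mathcal{D}$, you from compactness of $M\times Y\times Y$ together with closedness of $\mathcal{D}$ --- a trivial repackaging), and obtain the contradiction from closedness of the complement. Your explicit observation that finality of the subnet's index map is what preserves $d_Y(y_\alpha,y_\alpha')\to 0$ merely spells out a detail the paper's ``$1/n_\alpha\to 0$'' leaves implicit.
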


If there is no pair $(y, y') \in Y \times Y$ 
such that $(y, \eta ) \in \mathcal{D}$ and $(y', \eta) \in (Y \times M) \smallsetminus \mathcal{D}$ for some $\eta \in K$, 
then we understand that $\rho = \infty$.

\begin{proof}
Conversely, assume that $\rho = 0$.
Then for each $n=1, 2, \ldots$, 
there exist $(y_n, \eta_n) \in \mathcal{D}$ and 
$(y'_n, \eta_n) \in (Y \times M) \smallsetminus \mathcal{D}$
such that $d_Y (y_n, y'_n) < 1/n$.
Since $\mathcal{D}$ is compact, 
there exist a net $\{ n_{\alpha} \}$ and a point $(y, \eta) \in \mathcal{D}$ such that $y_{n_{\alpha}} \rightarrow y$ and $\eta_{n_{\alpha}} \rightarrow \eta$.
Then $d_Y (y_{n_{\alpha}}, y'_{n_{\alpha}}) < 1/n_{\alpha} \rightarrow 0$.
Hence $y'_{n_{\alpha}} \rightarrow y$ and so $(y'_{n_{\alpha}}, \eta_{n_{\alpha}}) \rightarrow (y, \eta)$.
Since $(Y \times M) \smallsetminus \mathcal{D}$ is closed, 
we get $(y, \eta) \in (Y \times M) \smallsetminus \mathcal{D}$.
This contradicts the fact that $(y, \eta) \in \mathcal{D}$.
Consequently, we have $\rho > 0$.
\end{proof}

\begin{lem}\label{L2}
For any $f \in \Lip (X, C(K))$, $Tf \in \Lip (Y, C(M))$.
\end{lem}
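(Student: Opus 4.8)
The plan is to invoke Proposition \ref{Prop:LipFn}, which reduces the claim to two assertions: that $Tf$ is continuous on $Y \times M$ and that its seminorm $\mathcal{L}_{Y, C(M)}(Tf)$ is finite. The continuity is immediate from the structure of $\mathcal{D}$: since $\mathcal{D}$ is clopen, both $\mathcal{D}$ and $(Y \times M) \smallsetminus \mathcal{D}$ are open, and on the former $Tf$ agrees with the composition $f(\varphi(\cdot), \psi(\cdot))$ of continuous maps while on the latter $Tf \equiv 0$; a function that is continuous on each block of a finite open partition is continuous. So the real work is the seminorm bound.

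For the seminorm, recalling that $\|(Tf)_y - (Tf)_{y'}\|_{C(M)} = \sup_{\eta \in M} |(Tf)(y,\eta) - (Tf)(y',\eta)|$, I would fix $y \neq y'$ and bound, uniformly in $\eta$, the ratio $|(Tf)(y,\eta) - (Tf)(y',\eta)| / d_Y(y,y')$ by a constant depending only on $f$; taking the supremum over $\eta$ and then over $y \neq y'$ then yields $\mathcal{L}_{Y,C(M)}(Tf) < \infty$. Everything comes down to a case analysis on the membership of $(y,\eta)$ and $(y',\eta)$ in $\mathcal{D}$.

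The decisive case is when both points lie in $\mathcal{D}$. By property (ii), $y$ and $y'$ then lie in the clopen decomposition $V^\eta_1, \ldots, V^\eta_{n_\eta}$ of $\mathcal{D}^\eta$. If they lie in the same block $V^\eta_i$, then $\psi^\eta$ is constant there, say $\psi(y,\eta) = \psi(y',\eta) = \xi$, so with $x = \varphi(y,\eta)$ and $x' = \varphi(y',\eta)$ the difference equals $f_x(\xi) - f_{x'}(\xi)$; bounding this by $\|f_x - f_{x'}\|_{C(K)} \le \mathcal{L}_{X,C(K)}(f)\, d_X(x,x')$ and then applying (\ref{eq:Phi}) gives a ratio of at most $L\,\mathcal{L}_{X,C(K)}(f)$. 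If instead they lie in different blocks, then (\ref{eq:SetDist}) forces $d_Y(y,y') \ge r$, so the ratio is at most $2\|f\|_{C(X\times K)}/r$. When exactly one of the two points lies in $\mathcal{D}$, Lemma \ref{L1} gives $d_Y(y,y') \ge \rho$, whence the ratio is at most $\|f\|_{C(X\times K)}/\rho$; when neither point lies in $\mathcal{D}$ the difference vanishes.

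Collecting the four cases, the ratio is bounded by $\max\{ L\,\mathcal{L}_{X,C(K)}(f),\ 2\|f\|_{C(X\times K)}/r,\ \|f\|_{C(X\times K)}/\rho \}$, which is independent of $\eta$, $y$, and $y'$. The main obstacle is precisely securing this uniformity in $\eta$: the separation constant $r$ in (\ref{eq:SetDist}) and the gap $\rho$ from Lemma \ref{L1} are exactly the ingredients declared \emph{independent of $\eta$}, and without them the ``different block'' and ``mixed membership'' cases would only furnish $\eta$-dependent bounds that need not combine into a finite seminorm. With these in hand, Proposition \ref{Prop:LipFn} closes the argument.
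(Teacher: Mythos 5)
Your proposal is correct and follows the paper's own proof essentially verbatim: the same reduction via Proposition \ref{Prop:LipFn}, the same continuity argument using the clopenness of $\mathcal{D}$, and the same case analysis on membership in $\mathcal{D}$ with the identical bounds $L\,\mathcal{L}_{X,C(K)}(f)$, $2\|f\|_{C(X\times K)}/r$, and $\|f\|_{C(X\times K)}/\rho$ drawn from (\ref{eq:Phi}), (\ref{eq:SetDist}), and Lemma \ref{L1}. Your closing remark correctly identifies why the $\eta$-independence of $r$ and $\rho$ is the crux, which is exactly the role these constants play in the paper's estimate (\ref{eq:L2-7}).
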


\begin{proof}
Let $f \in \Lip (X, C(K))$.
By Proposition \ref{Prop:LipFn}, we have 
$f \in C( X \times K )$ and (\ref{eq:LipFn}).
 
We first show that $Tf \in C( Y \times M )$.
Since $\varphi : \mathcal{D} \rightarrow X$ and $\psi : \mathcal{D} \rightarrow K$ are continuous and since $f \in C( X \times K )$, 
the first line in (\ref{eq:Form}) implies that $Tf$ is continuous on $\mathcal{D}$.
Of course, the second one implies that it is so on $( Y \times M ) \smallsetminus \mathcal{D}$.
Noting that $\mathcal{D}$ is clopen, we see that $Tf$ is continuous on $Y \times M$.

To see that $Tf \in \Lip (Y, C(M))$, 
it suffices to show that 
\begin{equation}\label{eq:L2-2}
 \mathcal{L}_{Y, C(M)} (Tf)
 = \sup _{y, y' \in Y \atop y \neq y'}
 \frac{\| (Tf)_y - (Tf)_{y'} \|_{C(M)}}{d_Y (y,y')} < \infty .
\end{equation}
For this end, choose $y, y' \in Y$ so that $y \neq y'$ 
and let $\eta \in M$.
We consider three cases.

[\textsf{Case 1}] $(y, \eta), (y', \eta) \in \mathcal{D}$ :
By (ii), $y \in V^{\eta}_i$ and $y' \in V^{\eta}_{i'}$ for some 
$i, i' \in \{ 1, \ldots, n_{\eta} \}$.
We first consider the case $i = i'$.
Then $y, y' \in V^{\eta}_i$.
Since $\psi^{\eta}$ is constant on $V^{\eta}_i$, 
$\psi (y, \eta) = \psi^{\eta} (y) = \psi^{\eta} (y') = \psi (y', \eta)$.
Put $x = \varphi (y, \eta)$, $x' = \varphi(y', \eta)$ 
and $\xi = \psi (y, \eta) = \psi (y', \eta)$.
Using (\ref{eq:Form}), we compute 
\begin{equation}\label{eq:L2-3}\begin{split}
 \bigl| (Tf)(y, \eta) - (Tf)(y', \eta) \bigr| 
 & = \bigl| f( \varphi (y, \eta), \psi (y, \eta) ) 
          - f( \varphi (y', \eta), \psi (y', \eta) ) \bigr| \\
 & = | f(x, \xi) - f(x', \xi) | 
   = | f_x (\xi) - f_{x'} (\xi) | \\
 & \leq \| f_x - f_{x'} \|_{C(K)} \\
 & \leq \mathcal{L}_{X, C(K)} (f) \, d_X (x,x') \\
 & = \mathcal{L}_{X, C(K)} (f) \: 
     d_X \bigl( \varphi (y, \eta), \varphi (y', \eta) \bigr) \\
 & \leq \mathcal{L}_{X, C(K)} (f) \: L \: 
     d_Y (y, y'),
\end{split}\end{equation}
where the fourth and last lines 
follow from (\ref{eq:LipFn}) and (\ref{eq:Phi}), respectively.

On the other hand, if $i \neq i'$, 
then (\ref{eq:SetDist}) yields $d_Y (y, y') \geq d_Y (V^{\eta}_i, V^{\eta}_{i'}) \geq r$.
Hence 
\begin{equation}\label{eq:L2-4}
 \frac{\bigl| (Tf)(y, \eta) - (Tf)(y', \eta) \bigr|}{d_Y (y, y')} 
 \leq  \frac{\bigl| (Tf)(y, \eta) \bigr| + \bigl| (Tf)(y', \eta) \bigr|}{r}
 \leq \frac{2\, \| f \|_{C(X \times K)}}{r} .
\end{equation}

[\textsf{Case 2}] $(y, \eta) \in \mathcal{D}$ and $(y', \eta) \in (Y \times M) \smallsetminus \mathcal{D}$ :
Then Lemma \ref{L1} says that 
$d_Y (y, y') \geq \rho > 0$.
By (\ref{eq:Form}), we get  
\begin{equation}\label{eq:L2-5}
 \frac{\bigl| (Tf)(y, \eta) - (Tf)(y', \eta) \bigr|}{d_Y (y, y')} 
 \leq \frac{\bigl| f( \varphi (y, \eta), \psi (y, \eta) ) - 0 \bigr|}{\rho} 
 \leq \frac{\| f \|_{C(X \times K)}}{\rho} .
\end{equation}

[\textsf{Case 3}] $(y, \eta), (y', \eta) \in (Y \times M) \smallsetminus \mathcal{D}$ :
By (\ref{eq:Form}), 
\begin{equation}\label{eq:L2-6}
 (Tf)(y, \eta) - (Tf)(y', \eta) =0.
\end{equation}

Combining (\ref{eq:L2-3})--(\ref{eq:L2-6}), 
we can arrive at (\ref{eq:L2-2}).
Indeed, if we put $C = \max \bigl\{ L, 2/r, 1/\rho \bigr\}$, then we have 
\begin{equation}\label{eq:L2-7}
 \mathcal{L}_{Y, C(M)} (Tf) 
  =  \sup_{y, y' \in Y \atop y \neq y'} \sup_{\eta \in M} \frac{\bigl| (Tf)(y, \eta) - (Tf)(y', \eta) \bigr|}{d_Y (y, y')}  
   \leq C \, \| f \|_{\Lip (X, C(K))}, 
\end{equation}
because $\mathcal{L}_{X, C(K)} (f) \leq \| f \|_{\Lip (X, C(K))}$ and $\| f \|_{C(X \times K)} \leq \| f \|_{\Lip (X, C(K))}$.
\end{proof}

Lemma \ref{L2} says that $T$ maps $\Lip (X, C(K))$ into $\Lip (Y, C(M))$.
While the form (\ref{eq:Form}) shows that $T$ is a homomorphism.
Thus we obtain the converse statement of Theorem \ref{Thm1}.

\begin{rem}
From (\ref{eq:Form}), we see that $\| Tf \|_{C(Y \times M)} \leq \| f \|_{C(X \times K)}$.
Using this and (\ref{eq:L2-7}),  we obtain the norm estimate  
$$
 \| T \| = \sup_{\| f \|_{\Lip (X, C(K))} \leq 1} \| Tf \|_{\Lip (Y, C(M))}  \leq  C+1.
$$
This estimate is not sharp, 
but it seems to be difficult to give an exact expression of $\| T \|$.
\end{rem}

\subsection{{\sc Proof of Necessity}}%

We turn to the proof of the main statement of Theorem \ref{Thm1}.
Suppose that $T$ is an arbitrary homomorphism from $\Lip (X, C(K))$ into $\Lip (Y, C(M))$.
Since $\Lip (Y, C(M))$ is semi-simple, we know from \cite[Theorem 11.10]{R} that $T$ is continuous.
Thus the norm $\| T \|$ is determined as a bounded linear operator $T$.

If $T=O$, then we only take $\mathcal{D} = \emptyset$.
So, we assume that $T \neq O$.

\begin{lem}\label{L3}
There exist a clopen subset $\mathcal{D}$ of $Y \times M$ 
and two mapping $\varphi : \mathcal{D} \rightarrow X$ and 
$\psi : \mathcal{D} \rightarrow K$ such that {\rm (\ref{eq:Form})} holds.
\end{lem}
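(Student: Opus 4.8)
The plan is to carve out $\mathcal{D}$ from the single idempotent $e = T\mathbf{1}$, where $\mathbf{1}$ denotes the unit of $\Lip(X, C(K))$ (the constant function with value $1$), and then to invoke the identification of the maximal ideal space in Proposition~\ref{Prop:MIS}. The starting observation is that, for each $(y,\eta) \in Y \times M$, the evaluation $\Psi_{y,\eta}(f) = (Tf)(y,\eta)$ is a linear functional on $\Lip(X, C(K))$ which is moreover multiplicative: since $T$ is a homomorphism and point evaluation on $\Lip(Y, C(M)) \subset C(Y \times M)$ is multiplicative, we have $\Psi_{y,\eta}(fg) = (T(fg))(y,\eta) = ((Tf)(Tg))(y,\eta) = \Psi_{y,\eta}(f)\,\Psi_{y,\eta}(g)$. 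Thus each $\Psi_{y,\eta}$ is either identically zero or a character, and I would use exactly this dichotomy to split $Y \times M$.

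First I would record that $e = T\mathbf{1}$ is an idempotent: $e^2 = T(\mathbf{1}^2) = T\mathbf{1} = e$ in $\Lip(Y, C(M))$. Viewing $e$ as an element of $C(Y \times M)$ via Proposition~\ref{Prop:LipFn}, where multiplication is pointwise, this gives $e(y,\eta)^2 = e(y,\eta)$, so $e(y,\eta) \in \{0,1\}$ for every $(y,\eta)$. Since $e$ is continuous, the set
$$
 \mathcal{D} = \{ (y,\eta) \in Y \times M : e(y,\eta) = 1 \} = e^{-1}(\{1\})
$$
is both open and closed, hence clopen, which settles the topological requirement on $\mathcal{D}$.

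Next I would establish the second line of {\rm (\ref{eq:Form})}. The homomorphism identity $f = f \cdot \mathbf{1}$ yields $Tf = (Tf)\cdot(T\mathbf{1}) = (Tf)\,e$, so $(Tf)(y,\eta) = (Tf)(y,\eta)\,e(y,\eta)$ for all $f$ and all $(y,\eta)$. When $(y,\eta) \notin \mathcal{D}$ we have $e(y,\eta) = 0$, forcing $(Tf)(y,\eta) = 0$, which is precisely the prescribed value off $\mathcal{D}$. For the first line, take $(y,\eta) \in \mathcal{D}$; then $\Psi_{y,\eta}(\mathbf{1}) = e(y,\eta) = 1 \neq 0$, so $\Psi_{y,\eta}$ is a nonzero multiplicative linear functional on $\Lip(X, C(K))$. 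By Proposition~\ref{Prop:MIS} there is a \emph{unique} point $(x,\xi) \in X \times K$ with $\Psi_{y,\eta}(f) = f(x,\xi)$ for all $f$, and I would define $\varphi(y,\eta) = x$ and $\psi(y,\eta) = \xi$; the uniqueness clause in Proposition~\ref{Prop:MIS} is exactly what makes these two assignments well defined as maps on $\mathcal{D}$. This gives $(Tf)(y,\eta) = f(\varphi(y,\eta), \psi(y,\eta))$ on $\mathcal{D}$, completing {\rm (\ref{eq:Form})}.

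The argument is essentially bookkeeping once $e = T\mathbf{1}$ is singled out; the only genuinely delicate point is the passage from ``$\Psi_{y,\eta}$ is zero or a character'' to a \emph{uniform} description over all of $Y \times M$, and this is precisely what replacing the unital hypothesis by the idempotent $e$ resolves cleanly, so I do not anticipate a serious obstacle here. Note that continuity of $\varphi$ and $\psi$, together with the Lipschitz-type bounds {\rm (i)} and {\rm (ii)}, is not asserted in this lemma and would be dealt with in the subsequent steps.
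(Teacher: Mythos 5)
Your proof is correct and takes essentially the same route as the paper's: both define $\mathcal{D}$ as the level set $(T\mathbf{1})^{-1}(\{1\})$ of the idempotent $T\mathbf{1}$, use continuity of $T\mathbf{1}$ for clopen-ness, apply Proposition~\ref{Prop:MIS} to the character $\Psi_{(y,\eta)}$ to define $\varphi$ and $\psi$ on $\mathcal{D}$, and use $Tf = (Tf)(T\mathbf{1})$ to force $(Tf)(y,\eta)=0$ off $\mathcal{D}$.
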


\begin{proof}
Let $\mathbf{1}$ denote the unit of $\Lip (X, C(K))$, 
namely, the constant $1$ function on $X \times K$.
Since $(T \mathbf{1})^2 = T( \mathbf{1}^2 ) = T \mathbf{1}$, 
we have $(T \mathbf{1})(y, \eta) \in \{ 1, 0 \}$ for all $(y, \eta) \in Y \times M$.
Put 
\begin{equation}\label{eq:L3-1}
 \mathcal{D} = \{ (y, \eta) \in Y \times M : (T\mathbf{1})(y, \eta) = 1 \}.
\end{equation}
Then 
$$
 (Y \times M) \smallsetminus \mathcal{D} 
 = \{ (y, \eta ) \in Y \times M : (T\mathbf{1})(y, \eta) = 0 \} .
$$
Since $T\mathbf{1}$ is continuous on $Y \times M$, 
both $\mathcal{D}$ and $(Y \times M) \smallsetminus \mathcal{D}$ are closed.
Hence $\mathcal{D}$ is clopen.

To determine the mappings $\varphi : \mathcal{D} \rightarrow X$ and 
$\psi : \mathcal{D} \rightarrow K$, fix any $(y, \eta) \in \mathcal{D}$.
Define a functional $\Psi_{(y, \eta)}$ on $\Lip (X, C(K))$ by 
$$
 \Psi_{(y, \eta)} (f) = (Tf)(y, \eta) \qquad \bigl( f \in \Lip (X, C(K)) \bigr).
$$
Then $\Psi_{(y, \eta)}$ is a homomorphism from $\Lip (X, C(K))$ into $\mathbb{C}$.
Moreover, (\ref{eq:L3-1}) yields $\Psi_{(y, \eta)} (\textbf{1}) = (T\textbf{1})(y, \eta) = 1$.
Hence $\Psi_{(y, \eta)}$ is a multiplicative linear functional on $\Lip (X, C(K))$.
Thus Proposition \ref{Prop:MIS} gives 
a unique point $(x, \xi) \in X \times K$ such that 
$$
 \Psi_{(y, \eta)} (f) = f(x, \xi) \qquad \bigl( f \in \Lip (X, C(K)) \bigr).
$$
By putting $\varphi (y, \eta) = x$ and $\psi (y, \eta) = \xi$, 
we determine the mappings $\varphi : \mathcal{D} \rightarrow X$ 
and $\psi : \mathcal{D} \rightarrow K$.
Then, for any $f \in \Lip (X, C(K))$, 
\begin{equation}\label{eq:L3-2}
 (Tf)(y, \eta) = \Psi_{(y, \eta)} (f) = f(x, \xi) = f \bigl( \varphi(y, \eta), \psi (y, \eta) \bigr) .
\end{equation}

Finally, if $(y, \eta) \in (Y \times M) \smallsetminus \mathcal{D}$, 
then $(T\mathbf{1})(y, \eta) = 0$ and so 
for any $f \in \Lip (X, C(K))$, 
$Tf = T(f \mathbf{1}) = (Tf)(T\mathbf{1})$ and so 
$$
 (Tf)(y, \eta) = (Tf)(y, \eta) \, (T\mathbf{1})(y, \eta) = 0.
$$
Together with (\ref{eq:L3-2}), we establish (\ref{eq:Form}).
\end{proof}

\begin{lem}\label{L4}
The mappings $\varphi : \mathcal{D} \rightarrow X$ and $\psi : \mathcal{D} \rightarrow K$ are continuous.
\end{lem}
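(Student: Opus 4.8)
The plan is to prove that the single map $\Phi = (\varphi, \psi) : \mathcal{D} \to X \times K$ is continuous, from which the continuity of $\varphi$ and $\psi$ follows immediately by composing with the coordinate projections. Since $\mathcal{D}$ is closed in the compact space $Y \times M$, it is convenient to argue with nets: I would fix a net $\{ (y_\alpha, \eta_\alpha) \}$ in $\mathcal{D}$ converging to some $(y, \eta)$, which again lies in $\mathcal{D}$ because $\mathcal{D}$ is closed, and show that $\Phi(y_\alpha, \eta_\alpha) \to \Phi(y, \eta)$.

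First I would exploit the compactness of the target. As $X \times K$ is compact, being a product of compact spaces, the net $\{ \Phi(y_\alpha, \eta_\alpha) \}$ admits a convergent subnet, say $\Phi(y_{\alpha_\beta}, \eta_{\alpha_\beta}) \to (x_0, \xi_0)$ for some $(x_0, \xi_0) \in X \times K$. The task is then to identify this limit as $\Phi(y, \eta) = (\varphi(y, \eta), \psi(y, \eta))$.

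The identification rests on testing against the functions in $\Lip(X, C(K))$. Each such $f$ belongs to $C(X \times K)$ by Proposition \ref{Prop:LipFn}, so along the subnet one has $f(\Phi(y_{\alpha_\beta}, \eta_{\alpha_\beta})) \to f(x_0, \xi_0)$. On the other hand, the defining identity (\ref{eq:L3-2}) gives $f(\Phi(y_{\alpha_\beta}, \eta_{\alpha_\beta})) = (Tf)(y_{\alpha_\beta}, \eta_{\alpha_\beta})$, and since $Tf \in \Lip(Y, C(M)) \subset C(Y \times M)$ is continuous, the right-hand side converges to $(Tf)(y, \eta) = f(\varphi(y, \eta), \psi(y, \eta))$. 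Equating the two limits yields $f(x_0, \xi_0) = f(\varphi(y, \eta), \psi(y, \eta))$ for every $f \in \Lip(X, C(K))$. Because Proposition \ref{Prop:LipSep} shows that $\Lip(X, C(K))$ separates the points of $X \times K$, this forces $(x_0, \xi_0) = (\varphi(y, \eta), \psi(y, \eta))$.

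Finally I would upgrade from subnets to the full net. The computation above shows that every convergent subnet of $\{ \Phi(y_\alpha, \eta_\alpha) \}$ has the same limit $\Phi(y, \eta)$; since $X \times K$ is compact, a net all of whose convergent subnets share a common limit must itself converge to that limit. Hence $\Phi(y_\alpha, \eta_\alpha) \to \Phi(y, \eta)$, which establishes the joint continuity of $\Phi$ and therefore of $\varphi$ and $\psi$. The main subtlety is precisely this subnet-to-net passage: the separation argument pins down the limit of an arbitrary convergent subnet but not, a priori, that of the net itself, so the compactness of $X \times K$ is essential both to extract convergent subnets and to promote the subnet conclusion to convergence of the original net.
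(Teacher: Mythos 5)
Your proof is correct, but it takes a genuinely different route from the paper. The paper argues directly and locally: given an open neighborhood $\mathcal{U}$ of $\Phi(y_0,\eta_0)$, it invokes the \emph{quantitative} form of Proposition~\ref{Prop:LipSep} to get a peaking function $f$ with $f(\Phi(y_0,\eta_0))=1$ and $f \leq m < 1$ off $\mathcal{U}$, and then uses continuity of $Tf$ to manufacture the open set $\mathcal{V} = \{(y,\eta) \in \mathcal{D} : |(Tf)(y,\eta)| > m\}$, which is mapped by $\Phi$ into $\mathcal{U}$; the strict gap $m<1$ does all the work, and compactness of $X \times K$ is never invoked in the proof itself. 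You instead run a compactness--nets argument: extract a convergent subnet of $\Phi(y_\alpha,\eta_\alpha)$ in the compact space $X \times K$, identify its limit through the identity $f \circ \Phi = (Tf)|_{\mathcal{D}}$ together with the \emph{qualitative} fact that $\Lip(X,C(K))$ separates points, and then upgrade from subnets to the full net via the standard fact that in a compact space a net all of whose convergent subnets share a limit converges to it. Each of your steps checks out (in particular you correctly use nets rather than sequences, which matters since $K$ and $M$ need not be metrizable, and your subnet-to-net upgrade is valid: a subnet frequently outside a neighborhood of the putative limit would have a convergent sub-subnet with limit in the closed complement). What the trade looks like: your argument needs only point separation, which is conceptually cleaner --- it is essentially the observation that $\Phi$ is continuous into the weak topology induced by $\Lip(X,C(K))$, and that a coarser Hausdorff topology on a compact space coincides with the original one --- but it pays for this with compactness of $X \times K$ and net machinery; the paper's argument is more elementary and self-contained, exploiting the stronger peaking property that Proposition~\ref{Prop:LipSep} already provides.
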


\begin{proof}
Define a mapping $\Phi : \mathcal{D} \rightarrow X \times K$ by 
$$
 \Phi (y, \eta) = \bigl( \varphi (y, \eta), \psi (y, \eta) \bigr) \qquad \bigl( (y, \eta) \in \mathcal{D} \bigr) .
$$
We prove the lemma by verifying that $\Phi$ is continuous at each point $( y_0, \eta_0 ) \in \mathcal{D}$.
Let $\mathcal{U}$ be an arbitrary open neighborhood of $\Phi ( y_0, \eta_0 )$ in $X \times K$. 
By Proposition~\ref{Prop:LipSep}, 
there exists an $f \in \Lip (X, C(K))$ such that $0 \leq f \leq 1$,
 $f( \Phi (y_0,  \eta_0) ) = 1$ and 
\begin{equation}\label{eq:L4-1}
  0 \leq f(x, \xi) \leq m < 1  \qquad  
 \bigl( (x, \xi) \in ( X \times K ) \smallsetminus \mathcal{U} \bigr).
\end{equation}
Put $\mathcal{V} = \bigl\{ (y, \eta) \in \mathcal{D} : |(Tf)(y, \eta)| > m \bigr\}$.
Since $Tf$ is continuous on $Y \times M$, $\mathcal{V}$ is open.
Also, $( y_0, \eta_0 ) \in \mathcal{V}$ because
$$
 (Tf)( y_0, \eta_0 ) = f\bigl( \varphi( y_0, \eta_0 ), \psi ( y_0, \eta_0 ) \bigr) = f \bigl( \Phi ( y_0, \eta_0 ) \bigr) = 1 > m.
$$
Moreover, if $(y, \eta) \in \mathcal{V}$, then 
$$
 \bigl| f( \Phi (y, \eta) ) \bigr| 
 = \bigl| f \bigl( \varphi (y, \eta), \psi (y, \eta) \bigr) \bigr|
 = \bigl| (Tf)(y, \eta) \bigr| > m
$$
and (\ref{eq:L4-1}) forces that $\Phi (y, \eta) \in \mathcal{U}$.
Hence $\Phi ( \mathcal{V} ) \subset \mathcal{U}$.
Thus $\Phi$ is continuous at $(y_0, \eta_0)$, as desired.
\end{proof}

\begin{lem}
$\varphi$ satisfies {\rm (i)}.
\end{lem}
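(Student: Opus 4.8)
The plan is to extract the uniform Lipschitz bound for $\varphi$ directly from the boundedness of $T$, by feeding $T$ a well-chosen test function built from the metric $d_X$. Since $\Lip (Y, C(M))$ is semi-simple, $T$ is automatically continuous, so there is a finite operator norm $\| T \|$ with $\| Tg \|_{\Lip (Y, C(M))} \le \| T \| \, \| g \|_{\Lip (X, C(K))}$ for every $g$; the constant $L$ I produce will be an explicit multiple of $\| T \|$.

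First I would fix a pair $(y, \eta), (y', \eta) \in \mathcal{D}$ with $y \neq y'$ and set $x = \varphi (y, \eta)$, $x' = \varphi (y', \eta)$. The idea is to choose a function in $\Lip (X, C(K))$ that, after applying $T$, reads off $d_X(x,x')$ in a quantitatively controlled way. The natural candidate is the distance function to $x$: define $g(t, \xi) = d_X (t, x)$ for $(t, \xi) \in X \times K$, which is independent of the second variable. By the triangle inequality $g$ has $\mathcal{L}_{X, C(K)} (g) \le 1$ and $\| g \|_{C(X \times K)} \le \operatorname{diam} (X)$, so by Proposition~\ref{Prop:LipFn} we get $g \in \Lip (X, C(K))$ with $\| g \|_{\Lip (X, C(K))} \le \operatorname{diam} (X) + 1$, a bound that does not depend on the chosen pair.

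Next I would evaluate $Tg$ at the two points using the form (\ref{eq:Form}). Since both points lie in $\mathcal{D}$, we obtain $(Tg)(y, \eta) = g(x, \psi (y, \eta)) = d_X (x, x) = 0$ and $(Tg)(y', \eta) = g(x', \psi (y', \eta)) = d_X (x', x) = d_X ( \varphi (y, \eta), \varphi (y', \eta) )$. Hence $| (Tg)(y, \eta) - (Tg)(y', \eta) |$ equals exactly $d_X ( \varphi (y, \eta), \varphi (y', \eta) )$. Bounding this difference by $\| (Tg)_y - (Tg)_{y'} \|_{C(M)} \le \mathcal{L}_{Y, C(M)} (Tg) \, d_Y (y, y')$ and then by $\| Tg \|_{\Lip (Y, C(M))} \, d_Y (y, y') \le \| T \| ( \operatorname{diam} (X) + 1 ) \, d_Y (y, y')$ yields the desired ratio estimate. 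Since the resulting bound $L = \| T \| ( \operatorname{diam} (X) + 1 )$ is independent of $(y, \eta)$, $(y', \eta)$ and of $\eta$, this is precisely {\rm (i)}.

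The only real subtlety—rather than an obstacle—is recognizing that the distance-to-$x$ function is the right test function: it is globally $1$-Lipschitz, has sup-norm controlled by $\operatorname{diam}(X)$, and is tailored so that $Tg$ vanishes at $(y, \eta)$ and returns exactly $d_X(\varphi(y,\eta),\varphi(y',\eta))$ at $(y', \eta)$. Everything else is a routine application of the operator bound on $T$ together with Proposition~\ref{Prop:LipFn}. I would take care to emphasize that although $g$ depends on the fixed pair through $x$, its $\Lip$-norm estimate is uniform, so the extracted constant $L$ is genuinely independent of the pair.
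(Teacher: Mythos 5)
Your proposal is correct and takes essentially the same route as the paper: both arguments feed $T$ the distance function to one of the two image points (you center it at $x = \varphi(y,\eta)$ so that $(Tg)(y,\eta)=0$ and the difference equals $d_X(\varphi(y,\eta),\varphi(y',\eta))$ exactly, whereas the paper centers it at $\varphi(y',\eta)$ and uses the reverse triangle inequality, a purely cosmetic distinction), then extend it trivially to $X \times K$ and apply the bound $\mathcal{L}_{Y, C(M)}(Tg) \leq \| T \| \, \| g \|_{\Lip (X, C(K))}$ to obtain $L = \| T \| \, ( \operatorname{diam}(X) + 1 )$. Your appeal to semi-simplicity of $\Lip (Y, C(M))$ for the automatic continuity of $T$ matches the paper's citation of Rudin as well.
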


\begin {proof}
Let $(y, \eta), (y', \eta) \in \mathcal{D}$ with $y \neq y'$.
Put $x_0 = \varphi (y', \eta)$ and
$$
 f(x) = d_X (x, x_0)  \qquad (x \in X).
$$
Then $f \in \Lip (X)$ and $\| f \|_{\Lip (X)} \leq \operatorname{diam} (X) + 1$.
Extend $f$ to $X \times K$ by $\hat{f} (x, \xi) = f(x)$ for all $(x, \xi) \in X \times K$.
Clearly $\hat{f} \in \Lip (X, C(K))$ and $\| \hat{f} \|_{\Lip (X, C(K))} = \| f \|_{\Lip (X)}$.
Moreover, we have
\begin{equation*}\begin{split}
 d_X  \bigl( \varphi (y, \eta), \varphi ( y', \eta ) \bigr) & 
 = \bigl| d_X \bigl( \varphi (y, \eta), x_0 \bigr) - d_X \bigl( \varphi (y', \eta), x_0 \bigr) \bigr| \\
 & = \bigl| f ( \varphi (y, \eta) ) - f( \varphi (y', \eta) ) \bigr| \\
 & = \bigl| \hat{f} \bigl( \varphi (y, \eta), \psi (y, \eta) \bigr) 
                    - \hat{f} \bigl( \varphi (y', \eta), \psi (y', \eta) \bigr) \bigr| \\
 & = \bigl| (T\hat{f}) (y, \eta) - (T\hat{f}) (y', \eta) \bigr|
    = \bigl| (T\hat{f})_y (\eta) - (T\hat{f})_{y'} (\eta) \bigr| \\
 & \leq \bigl\| (T\hat{f})_y - (T\hat{f})_{y'} \bigr\|_{C(M)} \\
 & \leq \mathcal{L}_{Y, C(M)} (T\hat{f}) \: d_Y (y, y').
\end{split}\end{equation*}
Since $\mathcal{L}_{(Y, C(M))} (T\hat{f}) \leq \| T\hat{f} \|_{\Lip (Y, C(M))} \leq \| T \| \, \| \hat{f} \|_{\Lip(X, C(K))} 
\leq \| T \|  ( \operatorname{diam} (X) + 1 )$, we obtain
$$
 \frac{d_X  \bigl( \varphi (y, \eta), \varphi ( y', \eta ) \bigr) }{d_Y (y, y')} 
    \leq \| T \| \; ( \operatorname{diam} (X) + 1 ), 
$$
which is (i).
\end{proof}

\begin{lem}\label{L6}
There exists an $r>0$ such that 
$$
 (y, \eta), (y', \eta) \in \mathcal{D} \ \mbox{and} \ \ d_Y (y, y') < r
 \ \mbox{imply} \ \ \psi^{\eta}  (y) = \psi^{\eta} (y') .
$$
\end{lem}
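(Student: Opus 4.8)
The plan is to exploit test functions that depend only on the $K$-variable. Given any $u \in C(K)$, I would define $\hat u$ on $X \times K$ by $\hat u (x, \xi) = u(\xi)$. By Proposition \ref{Prop:LipFn} this $\hat u$ lies in $\Lip(X, C(K))$, since it is continuous on $X\times K$ and $\mathcal{L}_{X, C(K)}(\hat u) = 0$ (it is constant in the $X$-direction); moreover $\|\hat u\|_{\Lip(X, C(K))} = \|u\|_{C(K)}$. The virtue of this choice is that on $\mathcal{D}$ formula (\ref{eq:Form}) collapses to $(T\hat u)(y, \eta) = \hat u(\varphi(y,\eta), \psi(y,\eta)) = u(\psi^{\eta}(y))$, so that $T\hat u$ simply records the composition of $u$ with $\psi^{\eta}$.

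Next I would run the estimate already used to establish (i). For $(y,\eta), (y',\eta) \in \mathcal{D}$ with $y \neq y'$, writing $(T\hat u)(y,\eta) = (T\hat u)_y(\eta)$ and passing to the $C(M)$-norm yields
\begin{equation*}
 |u(\psi^{\eta}(y)) - u(\psi^{\eta}(y'))| \leq \|(T\hat u)_y - (T\hat u)_{y'}\|_{C(M)} \leq \mathcal{L}_{Y, C(M)}(T\hat u)\, d_Y(y,y').
\end{equation*}
Since $T$ is bounded (it is continuous by semisimplicity, as recorded above) we have $\mathcal{L}_{Y, C(M)}(T\hat u) \leq \|T\hat u\|_{\Lip(Y,C(M))} \leq \|T\|\,\|u\|_{C(K)}$, so altogether
\begin{equation*}
 |u(\psi^{\eta}(y)) - u(\psi^{\eta}(y'))| \leq \|T\|\,\|u\|_{C(K)}\, d_Y(y,y').
\end{equation*}

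Then I would argue by contraposition, with the explicit choice $r = 1/\|T\|$, which is positive because $T \neq O$. Suppose $(y,\eta), (y',\eta) \in \mathcal{D}$, $d_Y(y,y') < r$, yet $\psi^{\eta}(y) \neq \psi^{\eta}(y')$. These are two distinct points of the compact Hausdorff, hence normal, space $K$, so by Urysohn's lemma there is a $u \in C(K)$ with $0 \leq u \leq 1$, $u(\psi^{\eta}(y)) = 1$ and $u(\psi^{\eta}(y')) = 0$; in particular $\|u\|_{C(K)} = 1$. Inserting this $u$ into the displayed inequality gives $1 \leq \|T\|\, d_Y(y,y') < \|T\| \cdot (1/\|T\|) = 1$, a contradiction. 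Hence $\psi^{\eta}(y) = \psi^{\eta}(y')$, and $r = 1/\|T\|$ does the job.

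I do not anticipate a serious obstacle: the only genuine idea is the ``vertical'' test function $\hat u$, after which the argument is the routine Lipschitz estimate combined with Urysohn separation. The one point meriting care is verifying that $\hat u$ really belongs to $\Lip(X, C(K))$ with $\|\hat u\|_{\Lip(X,C(K))} = \|u\|_{C(K)}$, so that the norm bound $\mathcal{L}_{Y,C(M)}(T\hat u) \leq \|T\|\,\|u\|_{C(K)}$ is legitimate; this is immediate from Proposition \ref{Prop:LipFn} once one observes that $\hat u$ is constant along $X$.
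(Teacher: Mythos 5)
Your proposal is correct and follows essentially the same route as the paper: the vertical test function $\tilde{u}(x,\xi)=u(\xi)$ built via Urysohn's lemma, the Lipschitz estimate $|u(\psi^{\eta}(y))-u(\psi^{\eta}(y'))| \leq \mathcal{L}_{Y,C(M)}(T\tilde{u})\,d_Y(y,y') \leq \|T\|\,d_Y(y,y')$, and the contradiction $1<1$. The only cosmetic difference is that you take $r=1/\|T\|$ while the paper takes any $r<1/\|T\|$; both work since the hypothesis $d_Y(y,y')<r$ is strict.
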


\begin{proof}
Take $r$ so that $0 < r < 1/ \| T \|$.
Choose $(y, \eta), (y', \eta) \in \mathcal{D}$ with $d_Y (y, y') < r$ and assume that $\psi^{\eta} (y) \neq \psi ^{\eta} (y')$.
By Urysohn's lemma, we find a $u \in C(K)$ such that $0 \leq u \leq 1$, $u (\psi^{\eta} (y)) = 1$ and 
$u (\psi^{\eta} (y')) = 0$.
Define a function on $X \times K$ as $\tilde{u} (x, \xi) = u(\xi)$ for all $(x, \xi) \in X \times K$. 
Then $\tilde{u} \in \Lip (X, C(K))$ and $\| \tilde{u} \|_{\Lip(X, C(K))} = \| u \|_{C(K)} = 1$.
Moreover we have 
\begin{equation*}\begin{split}
 1 & 
 = \bigl| u(\psi^{\eta} (y)) - u (\psi^{\eta} (y')) \bigr|
 = \bigl| u (\psi (y, \eta)) - u (\psi (y', \eta)) \bigr|  \\ 
 & = \bigl| \tilde{u} \bigl( \varphi (y, \eta), \psi (y, \eta) \bigr) 
                   - \tilde{u} \bigl( \varphi (y', \eta), \psi (y',\eta) \bigr) \bigr| \\
 & = \bigl| (T\tilde{u}) (y, \eta) - (T\tilde{u}) (y', \eta) \bigr| \\
 & \leq \bigl\| (T\tilde{u})_y - (T\tilde{u})_{y'} \bigr\|_{C(M)} \\
 & \leq \mathcal{L}_{Y, C(M)} (T\tilde{u}) \; d_Y (y, y') \\
 & <  \| T\tilde{u} \|_{\Lip (Y, C(M))} \, r 
   \leq \| T \| \, \| \tilde{u} \|_{\Lip (X, C(K))} \, r 
 = \| T \| \, r < 1,
 \end{split}\end{equation*}
a contradiction.
Hence $\psi^{\eta} (y)= \psi^{\eta} (y')$.
\end{proof}

\begin{lem}
$\psi$ satisfies {\rm (ii)}.
\end{lem}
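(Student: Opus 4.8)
The plan is to derive (ii) from Lemma~\ref{L6} alone, taking the constant $r$ to be exactly the one furnished by that lemma; note that this $r$ is already independent of $\eta$, which is precisely what (ii) requires. Fix $\eta \in M$. First I would record that $\mathcal{D}^{\eta}$ is clopen in $Y$: the map $y \mapsto (y, \eta)$ is continuous from $Y$ into $Y \times M$, and $\mathcal{D}^{\eta}$ is the preimage under it of the clopen set $\mathcal{D}$, hence clopen.

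Next, on $\mathcal{D}^{\eta}$ I would introduce the relation that $y$ and $y'$ are joined by a finite chain $y = y_0, y_1, \ldots, y_k = y'$ of points of $\mathcal{D}^{\eta}$ with $d_Y(y_{l-1}, y_l) < r$ for every $l$. This is an equivalence relation; let $V^{\eta}_1, V^{\eta}_2, \ldots$ denote its classes, which are pairwise disjoint with union $\mathcal{D}^{\eta}$. Applying Lemma~\ref{L6} to each consecutive pair of a chain gives $\psi^{\eta}(y_{l-1}) = \psi^{\eta}(y_l)$, so $\psi^{\eta}$ is constant on every class. I would then verify the separation $d_Y(V^{\eta}_i, V^{\eta}_j) \geq r$ for $i \neq j$: if some $y \in V^{\eta}_i$ and $y' \in V^{\eta}_j$ satisfied $d_Y(y, y') < r$, the one-step chain $y, y'$ would place them in the same class, a contradiction. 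The same one-step observation shows each class is open in $Y$, since for $y \in V^{\eta}_i$ every point of the open set $B(y, r) \cap \mathcal{D}^{\eta}$ lies within $r$ of $y$ and in $\mathcal{D}^{\eta}$, hence in $V^{\eta}_i$; thus $V^{\eta}_i$ contains a neighborhood of each of its points.

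Finally I would establish finiteness and closedness. Since distinct classes are at mutual distance $\geq r$, selecting one point from each class yields an $r$-separated subset of $Y$, and total boundedness of the compact space $Y$ forces only finitely many such points, hence finitely many classes $V^{\eta}_1, \ldots, V^{\eta}_{n_{\eta}}$. With finiteness in hand, each $V^{\eta}_i = \mathcal{D}^{\eta} \smallsetminus \bigcup_{j \neq i} V^{\eta}_j$ is closed, because the removed set is a finite union of open sets and $\mathcal{D}^{\eta}$ is itself closed; therefore each $V^{\eta}_i$ is clopen, which together with the constancy and separation already shown completes (ii). (The degenerate case $\mathcal{D}^{\eta} = \emptyset$ gives $n_{\eta} = 0$ and is vacuously covered.)

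The main obstacle I anticipate is not any single step but the bookkeeping around the chain relation: one must confirm that it genuinely produces $r$-separated pieces and that the purely local statement of Lemma~\ref{L6}---equality of $\psi^{\eta}$ within radius $r$---propagates to global constancy on a whole class via the chains, all while the single constant $r$ is made to serve simultaneously for constancy, separation, and openness.
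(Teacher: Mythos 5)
Your proof is correct, but it takes a genuinely different route from the paper's. The paper builds its pieces as the level sets $V_y = \{ z \in \mathcal{D}^{\eta} : \psi^{\eta}(z) = \psi^{\eta}(y) \}$: these are closed because $\psi^{\eta}$ is continuous (Lemma~\ref{L4}), open because Lemma~\ref{L6} makes $\psi^{\eta}$ locally constant at scale $r$, finiteness comes from extracting a finite subcover of the compact set $\mathcal{D}^{\eta}$, and the separation $d_Y(V_{y_i}, V_{y_j}) \geq r$ is deduced afterwards from Lemma~\ref{L6} together with the equal-or-disjoint dichotomy for level sets. You instead partition $\mathcal{D}^{\eta}$ into $r$-chain equivalence classes, which builds the separation into the definition, obtains constancy of $\psi^{\eta}$ by iterating Lemma~\ref{L6} along a chain, gets finiteness from total boundedness of the compact space $Y$ (an $r$-separated set must be finite, since a ball of radius $r/2$ contains at most one of its points), and recovers closedness only at the end, from finiteness plus openness within the closed set $\mathcal{D}^{\eta}$. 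Two consequences of the difference are worth noting. First, your argument never invokes the continuity of $\psi$ (Lemma~\ref{L4}), which the paper needs for the closedness of its level sets, so your proof is slightly more self-contained at that point. Second, your classes may properly refine the paper's: two chain components lying at distance $\geq r$ can carry the same value of $\psi^{\eta}$ and would be merged into a single level set by the paper but kept separate by you; this is harmless, since (ii) does not require distinct $V^{\eta}_i$ to carry distinct values of $\psi^{\eta}$. Both proofs correctly use the single $\eta$-independent constant $r$ furnished by Lemma~\ref{L6} for constancy, separation, and openness simultaneously, and both cover the degenerate case $\mathcal{D}^{\eta} = \emptyset$ with $n_{\eta} = 0$.
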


\begin {proof}
Fix any $\eta \in M$ and put $\mathcal{D}^{\eta} = \{ y \in Y : (y, \eta) \in \mathcal{D} \}$.
Since $\mathcal{D}$ is clopen, 
$\mathcal{D}^{\eta}$ is a clopen subset of $Y$.
For any $y \in \mathcal{D}^{\eta}$, put 
\begin{equation}\label{eq:L7-1}
 V_y = \{ z \in \mathcal{D}^{\eta} : \psi^{\eta} (z) = \psi^{\eta} (y) \}.
\end{equation}
Clearly, $\psi^{\eta}$ is constant on $V_y$.
Also, we have 
\begin{equation}\label{eq:L7-2}
 V_y \cap V_{y'} \neq \emptyset \ \Longrightarrow \ V_y = V_{y'}.
\end{equation}

Since $\psi^{\eta}$ is continuous by Lemma \ref{L4}, 
$V_y$ is a closed subset of $\mathcal{D}^{\eta}$.
To see that $V_y$ is an open subset of $\mathcal{D}^{\eta}$, 
let $z \in V_y$ and consider 
an $r$-ball $B(z; r) = \{ w \in \mathcal{D}^{\eta} : d_Y (w, z) < r \}$, 
where $r$ is given in Lemma \ref{L6}.
If $w \in B(z; r)$, then $(w, \eta), (z, \eta) \in \mathcal{D}$ and $d_Y (w, z) < r$.
Hence Lemma \ref{L6} implies that $\psi^{\eta} (w) = \psi^{\eta} (z) = \psi^{\eta} (y)$, 
and so $w \in V_y$.
Therefore $B(z; r) \subset V_y$.
Thus $V_y$ is an open subset of $\mathcal{D}^{\eta}$.
Consequently, $V_y$ is a clopen subset of $Y$.

Note that 
$$
 \mathcal{D}^{\eta} = \bigcup_{y \in \mathcal{D}^{\eta}} V_y .
$$
Since $\mathcal{D}^{\eta}$ is compact, 
we can select finitely many 
$y_1, \ldots , y_n \in \mathcal{D}^{\eta}$ such that 
$$
 \mathcal{D}^{\eta} = \bigcup_{i=1}^n V_{y_i} .
$$
By (\ref{eq:L7-2}), 
we may assume that $V_{y_1}, \ldots , V_{y_n}$ are disjoint.

Finally we show that $d_Y ( V_{y_i}, V_{y_j} ) \geq r$ ($i \neq j$).
Assume that $d_Y ( V_{y_i}, V_{y_j} ) < r$.
Then there exist $z_i \in V_{y_i}$ and $z_j \in V_{y_j}$ such that 
$d_Y (z_i, z_j) < r$.
By Lemma \ref{L6}, $\psi^{\eta} (z_i) = 
\psi^{\eta} (z_j)$, and hence (\ref{eq:L7-1}) and (\ref{eq:L7-2}) yield $V_{y_i} = V_{y_j}$.
Since $V_{y_1}, \ldots , V_{y_n}$ is disjoint, 
we must have $d_Y ( V_{y_i}, V_{y_j} ) \geq r$ ($i \neq j$).

Putting $n_{\eta} = n$ and writing $V^{\eta} _i$ for $V_{y_i}$ ($i = 1, \ldots , n_{\eta}$), we obtain (ii).
\end{proof}

Thus the proof of Theorem \ref{Thm1} is completed.

\section{Proof of Theorem \ref{Thm2}}

In this section, we prove Theorem \ref{Thm2}.
Throughout this section, $T$ is a homomorphism from $\Lip (X, C(K))$ into $\Lip (Y, C(M))$
 with the form (\ref{eq:Form}) in Theorem \ref{Thm1}.
Of course, the set $\mathcal{D}$ and the mappings $\varphi$ and $\psi$ are as in Theorem \ref{Thm1}.
Since $T$ is bounded, we use its norm $\| T \|$ again.
Let $\mathbb{B}_{\Lip (X, C(K))}$ 
be the unit ball of $\Lip (X, C(K))$, that is,
$$
 \mathbb{B}_{\Lip (X, C(K))} 
 = \bigl\{ f \in \Lip (X, C(K)) : \| f \|_{\Lip (X, C(K))} \leq 1 \bigr\} .
$$

\subsection{{\sc Proof of Sufficiency}}

We first show the ``if'' part in Theorem \ref{Thm2}.

Suppose that $\varphi$ and $\psi$ satisfy (iii) and (iv) respectively.
We prove that $T$ is compact.
Here we may assume that $T \neq O$, 
otherwise there is nothing to prove.

\begin{lem}\label{La}
Let $(y_0, \eta_0) \in \mathcal{D}$.
For any $\varepsilon > 0$, 
there exists an open neighborhood $\Theta$ of $\eta_0$ in $M$
such that 
\begin{equation}\label{eq:La-1}
 \eta \in \Theta \ \ \mbox{implies} \ \sup_{f \in \mathbb{B}_{\Lip (X, C(K))}} 
 \bigl| (Tf)(y_0, \eta) - (Tf)(y_0, \eta_0) \bigr| < \varepsilon .
\end{equation}
\end{lem}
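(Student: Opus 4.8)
The plan is to split the difference $(Tf)(y_0,\eta)-(Tf)(y_0,\eta_0)$ into a contribution from the motion of $\psi(y_0,\cdot)$ in $K$ and a contribution from the motion of $\varphi(y_0,\cdot)$ in $X$. The first of these is the genuinely delicate part: elements of $C(K)$ carry no uniform modulus of continuity, so a small displacement of $\psi(y_0,\eta)$ need not make $f(x,\psi(y_0,\eta))$ close to $f(x,\psi(y_0,\eta_0))$ \emph{uniformly} over the unit ball. This is exactly the point at which hypothesis (iv) must be used, and it is used to eliminate that contribution entirely rather than to estimate it.

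First I would invoke (iv) with $y=y_0$: the set $\mathcal{D}_{y_0}$ splits as a union of disjoint clopen pieces $\Omega_{y_0}^1,\ldots,\Omega_{y_0}^{n_{y_0}}$ on each of which $\psi_{y_0}$ is constant. Since $(y_0,\eta_0)\in\mathcal{D}$ we have $\eta_0\in\mathcal{D}_{y_0}$, so $\eta_0$ lies in exactly one piece $\Omega_{y_0}^{i_0}$, which is open and hence an open neighborhood of $\eta_0$. On $\Omega_{y_0}^{i_0}$ the value $\psi(y_0,\eta)$ equals the constant $\xi_0:=\psi(y_0,\eta_0)$, and $\Omega_{y_0}^{i_0}\subset\mathcal{D}_{y_0}$ guarantees $(y_0,\eta)\in\mathcal{D}$, so the first line of (\ref{eq:Form}) applies throughout this set.

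Next, for $\eta\in\Omega_{y_0}^{i_0}$ and $f\in\mathbb{B}_{\Lip(X,C(K))}$, writing $x_\eta=\varphi(y_0,\eta)$ and $x_0=\varphi(y_0,\eta_0)$, formula (\ref{eq:Form}) together with $\psi(y_0,\eta)=\xi_0$ gives
\[
 (Tf)(y_0,\eta)-(Tf)(y_0,\eta_0)=f(x_\eta,\xi_0)-f(x_0,\xi_0)=f_{x_\eta}(\xi_0)-f_{x_0}(\xi_0),
\]
so that, by the definition (\ref{eq:LipFn}) of $\mathcal{L}_{X,C(K)}(f)$,
\[
 \bigl|(Tf)(y_0,\eta)-(Tf)(y_0,\eta_0)\bigr|\le\|f_{x_\eta}-f_{x_0}\|_{C(K)}\le\mathcal{L}_{X,C(K)}(f)\,d_X(x_\eta,x_0)\le d_X\bigl(\varphi(y_0,\eta),\varphi(y_0,\eta_0)\bigr),
\]
the last step using $\mathcal{L}_{X,C(K)}(f)\le\|f\|_{\Lip(X,C(K))}\le1$. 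The final bound no longer involves $f$, so the supremum over the unit ball is already controlled by a quantity depending only on $\eta$.

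Finally I would appeal to the continuity of $\varphi$ from Lemma~\ref{L4}: the map $\eta\mapsto\varphi(y_0,\eta)$ is continuous on $\mathcal{D}_{y_0}$, so there is an open neighborhood $\Theta'$ of $\eta_0$ with $d_X(\varphi(y_0,\eta),\varphi(y_0,\eta_0))<\varepsilon$ for all $\eta\in\Theta'$. Setting $\Theta=\Theta'\cap\Omega_{y_0}^{i_0}$ gives an open neighborhood of $\eta_0$ on which the displayed estimate is $<\varepsilon$ for every $f$ in the unit ball, which is precisely (\ref{eq:La-1}). The only real obstacle is the $K$-direction variation, and it is removed wholesale by the local constancy of $\psi$ furnished by (iv); what remains is the one-line Lipschitz estimate above combined with continuity of $\varphi$.
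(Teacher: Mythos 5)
Your proposal is correct and follows essentially the same route as the paper's proof: invoke (iv) at $y_0$ to locate $\eta_0$ in a clopen piece on which $\psi_{y_0}$ is constant (killing the $C(K)$-direction variation), then bound the remaining difference by $\mathcal{L}_{X,C(K)}(f)\,d_X\bigl(\varphi(y_0,\eta),\varphi(y_0,\eta_0)\bigr)\le d_X\bigl(\varphi(y_0,\eta),\varphi(y_0,\eta_0)\bigr)$ and shrink via continuity of $\varphi_{y_0}$. The paper merely defines $\Theta$ in one step as $\bigl\{\eta\in\Omega_{y_0}^{j}: d_X(\varphi_{y_0}(\eta),\varphi_{y_0}(\eta_0))<\varepsilon\bigr\}$, which coincides with your $\Theta'\cap\Omega_{y_0}^{i_0}$.
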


\begin{proof}
Put $\mathcal{D}_{y_0} = \{ \eta \in M : (y_0, \eta) \in \mathcal{D} \}$. 
Since $\eta_0 \in \mathcal{D}_{y_0}$, 
by (iv), 
there exists $j \in \{ 1, \ldots, n_{y_0} \}$ such that $\eta_0 \in 
\Omega_{y_0}^j$.
Then $\Omega_{y_0}^j$ is a clopen subset on which 
$\psi_{y_0}$ is constant.
Hence if $\eta \in \Omega_{y_0}^j$, 
then $\psi (y_0, \eta) = \psi_{y_0}(\eta) = \psi_{y_0} (\eta_0) = \psi (y_0, \eta_0)$.
Let $\varepsilon > 0$ and put 
$$
 \Theta = 
 \bigl\{ \eta \in \Omega_{y_0}^j : d_X \bigl( \varphi_{y_0} (\eta), \varphi_{y_0} (\eta_0) \bigr) < \varepsilon \bigr\} .
$$
Since $\varphi_{y_0} : \mathcal{D}_{y_0} \rightarrow X$ is continuous, 
$\Theta$ is an open neighborhood of $\eta_0$ in $\mathcal{D}_{y_0}$.
For any $\eta \in \Theta$, put $x = \varphi(y_0, \eta)$, 
$x_0 = \varphi(y_0, \eta_0)$ and $\xi = \psi (y_0, \eta) = \psi (y_0, \eta_0)$.
Then, for any $f \in \mathbb{B}_{\Lip (X, C(K))}$, 
we have
\begin{equation*}\begin{split}
 \bigl| (Tf)(y_0, \eta) - (Tf)(y_0, \eta_0) \bigr| 
 & = \bigl| 
       f \bigl( \varphi(y_0, \eta), \psi(y_0, \eta) \bigr) 
       - f \bigr( \varphi(y_0, \eta_0), \psi(y_0, \eta_0) \bigr) \bigr| \\
 & = \bigl| f(x, \xi) - f(x_0, \xi) \bigr|
   = \bigl| f_x (\xi) - f_{x_0} (\xi) \bigr| \\
 & \leq \| f_x - f_{x_0} \|_{C(K)} \\
 & \leq \mathcal{L}_{X, C(K)} (f) \: d_X (x, x_0) \\
 & = \mathcal{L}_{X, C(K)} (f) \: 
     d_X \bigl( \varphi (y_0, \eta), \varphi (y_0, \eta_0) \bigr) \\
 & = \mathcal{L}_{X, C(K)} (f) \: 
     d_X \bigl( \varphi_{y_0} (\eta), \varphi_{y_0} (\eta_0) \bigr) \\
 & \leq \| f \|_{\Lip (X, C(K))} \: \varepsilon \leq \varepsilon .
\end{split}\end{equation*}
Hence we obtain (\ref{eq:La-1}).
\end{proof}

\begin{lem}\label{Lb}
In $C(Y \times M)$, the closure of $T \bigl( \mathbb{B}_{\Lip (X, C(K))} \bigr)$ is compact.
\end{lem}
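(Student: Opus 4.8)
The plan is to deduce the relative compactness of $T\bigl(\mathbb{B}_{\Lip(X,C(K))}\bigr)$ in $C(Y\times M)$ from the Arzel\`a--Ascoli theorem. Recall from Proposition~\ref{Prop:LipFn} that $\|g\|_{C(Y,C(M))}=\|g\|_{C(Y\times M)}$, so each $Tf$ may be regarded as a scalar continuous function on the compact space $Y\times M$. It therefore suffices to show that the family $\{Tf:f\in\mathbb{B}_{\Lip(X,C(K))}\}$ is uniformly bounded and equicontinuous. Uniform boundedness is immediate: for $f\in\mathbb{B}_{\Lip(X,C(K))}$ we have $\|Tf\|_{C(Y\times M)}=\|Tf\|_{C(Y,C(M))}\leq\|Tf\|_{\Lip(Y,C(M))}\leq\|T\|$.

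The work lies in equicontinuity, which I would verify at an arbitrary point $(y_0,\eta_0)\in Y\times M$. If $(y_0,\eta_0)\notin\mathcal{D}$, then because $\mathcal{D}$ is clopen there is a neighborhood of $(y_0,\eta_0)$ contained in $(Y\times M)\smallsetminus\mathcal{D}$, on which $Tf$ vanishes identically for every $f$ by (\ref{eq:Form}); equicontinuity there is trivial. The essential case is $(y_0,\eta_0)\in\mathcal{D}$. Here I would split
$$
 \bigl|(Tf)(y,\eta)-(Tf)(y_0,\eta_0)\bigr|
 \leq \bigl|(Tf)(y,\eta)-(Tf)(y_0,\eta)\bigr|
     +\bigl|(Tf)(y_0,\eta)-(Tf)(y_0,\eta_0)\bigr|
$$
and control the two terms uniformly in $f\in\mathbb{B}_{\Lip(X,C(K))}$. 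The second term (variation in $\eta$ at the fixed first coordinate $y_0$) is handled directly by Lemma~\ref{La}: it is smaller than $\varepsilon$ as soon as $\eta$ lies in the neighborhood $\Theta$ furnished there, which I may further shrink, using that $\mathcal{D}_{y_0}$ is clopen, so that $(y_0,\eta)\in\mathcal{D}$ for all $\eta\in\Theta$.

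For the first term I would let $\delta$ be the number associated with $\varepsilon$ by (iii), let $r$ be the separation constant of (ii), let $\rho>0$ be the gap from Lemma~\ref{L1}, and set $\sigma=\min\{\delta,r,\rho\}$, restricting to $d_Y(y,y_0)<\sigma$. Since $\sigma\leq\rho$ and $(y_0,\eta)\in\mathcal{D}$, Lemma~\ref{L1} prevents $(y,\eta)$ from leaving $\mathcal{D}$, so $(y,\eta),(y_0,\eta)\in\mathcal{D}$; since $\sigma\leq r$, condition (ii) places $y$ and $y_0$ in a common clopen piece $V^{\eta}_i$, whence $\psi(y,\eta)=\psi(y_0,\eta)=:\xi$; and since $\sigma\leq\delta$, condition (iii) gives $d_X(\varphi(y,\eta),\varphi(y_0,\eta))<\varepsilon\,d_Y(y,y_0)$. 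Then, exactly as in the computation of Lemma~\ref{La},
$$
 \bigl|(Tf)(y,\eta)-(Tf)(y_0,\eta)\bigr|
 \leq \mathcal{L}_{X,C(K)}(f)\,d_X\bigl(\varphi(y,\eta),\varphi(y_0,\eta)\bigr)
 \leq \varepsilon\,\operatorname{diam}(Y),
$$
uniformly in $f\in\mathbb{B}_{\Lip(X,C(K))}$. Combining the two estimates on the neighborhood $B(y_0;\sigma)\times\Theta$ yields equicontinuity at $(y_0,\eta_0)$, and Arzel\`a--Ascoli then finishes the proof.

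The step I expect to be the main obstacle is arranging the first term so that (ii), (iii) and Lemma~\ref{L1} act uniformly in $\eta$ and $f$ at once: one must simultaneously keep $(y,\eta)$ inside $\mathcal{D}$ (the role of $\rho$), force $\psi$ to agree at $y$ and $y_0$ so that the $C(K)$-seminorm estimate applies (the role of $r$), and invoke the small-ratio condition (iii) with a single $\delta$ valid for every $\eta$. The uniformity over $\eta\in M$ of the constant $r$ in (ii) and of $\delta$ in (iii) is precisely what makes the single neighborhood $B(y_0;\sigma)\times\Theta$ work for all $f$ at once.
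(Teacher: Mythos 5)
Your proof is correct, and its skeleton (Arzel\`a--Ascoli, the trivial boundedness estimate, trivial equicontinuity off the clopen set $\mathcal{D}$, the triangle-inequality split, and Lemma~\ref{La} for the $\eta$-increment) coincides with the paper's; but you handle the $y$-increment by a genuinely different and much heavier route. The paper's observation is that for $f \in \mathbb{B}_{\Lip (X, C(K))}$ one has, for \emph{every} $\eta \in M$ and with no case analysis whatsoever,
$$
 \bigl| (Tf)(y, \eta) - (Tf)(y_0, \eta) \bigr|
 \leq \bigl\| (Tf)_y - (Tf)_{y_0} \bigr\|_{C(M)}
 \leq \mathcal{L}_{Y, C(M)} (Tf) \, d_Y (y, y_0)
 \leq \| T \| \, d_Y (y, y_0) ,
$$
so the family $T \bigl( \mathbb{B}_{\Lip (X, C(K))} \bigr)$ is uniformly Lipschitz in the $y$-variable by the operator norm alone, and taking $d_Y (y, y_0) < \varepsilon / \| T \|$ finishes that term instantly. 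You instead route the estimate through the symbol $( \varphi, \psi )$ with $\sigma = \min \{ \delta, r, \rho \}$; each of your steps does check out ($\rho$ from Lemma~\ref{L1} keeps $(y, \eta)$ in $\mathcal{D}$, $r$ from (ii) forces $y$ and $y_0$ into a common piece $V^{\eta}_i$ so that $\psi$ agrees, $\delta$ from (iii) bounds the ratio, and the uniformity in $\eta$ of all three constants is genuine), so your argument is valid. What the paper's shortcut buys is twofold: the ``main obstacle'' you flag simply dissolves, and one sees that Lemma~\ref{Lb} uses only (iv) (through Lemma~\ref{La}) plus the boundedness of $T$ --- relative compactness in the sup norm does \emph{not} need the small-ratio condition (iii), which the paper reserves for Lemma~\ref{Lc}, where it is exactly what upgrades sup-norm convergence to $\Lip (Y, C(M))$-norm convergence. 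Your sharper bound $\varepsilon \, d_Y (y, y_0)$ is, in effect, the content the paper defers to that later lemma, so nothing is wrong, but you are spending the hypothesis (iii) where it is not needed. (A cosmetic point: your final bound $\varepsilon \operatorname{diam} (Y)$ could as well be $\varepsilon \sigma$, since $d_Y (y, y_0) < \sigma$ there.)
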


\begin{proof}
According to Arzel\'{a}-Ascoli theorem (\cite[Theorem IV.6.7]{DS}), we show that 
$T \bigl( \mathbb{B}_{\Lip (X, C(K))} \bigr)$ is bounded and equicontinuous on $Y \times M$.

The boundedness follows from an easy computation:
$$
 \bigl| (Tf)(y, \eta) \bigr| 
  \leq \| Tf \|_{C(Y \times M)} 
  \leq \| Tf \|_{\Lip (Y, C(M))}  
  \leq \| T \| \, \| f \|_{\Lip (X, C(K))} 
  \leq \| T \|  
$$
for all $(y, \eta) \in Y \times M$ and all $f \in \mathbb{B}_{\Lip (X, C(K))}$.

The equicontinuity will be shown as follows:
Clearly, $T \bigl( \mathbb{B}_{\Lip (X, C(K))} \bigr)$ is equicontinuous on the clopen set $(Y \times M) \smallsetminus \mathcal{D}$, 
because $Tf = 0$ on $(Y \times M) \smallsetminus \mathcal{D}$
for all $f \in \Lip (X, C(K))$, by (\ref{eq:Form}).
To show that $T \bigl( \mathbb{B}_{\Lip (X, C(M))} \bigr)$ is equicontinuous at each $(y_0, \eta_0) \in \mathcal{D}$, let $\varepsilon > 0$.
Take an open neighborhood $\Theta$ of $\eta_0$ in $M$ as in Lemma \ref{La}, and put $V = \bigl\{ y \in Y : d_Y (y, y_0) < \varepsilon / \| T \| \bigr\}$.
Define an open neighborhood $\mathcal{W}$ of $(y_0, \eta_0)$ in $Y \times M$ as 
$$
 \mathcal{W} = ( V \times \Theta ) \ \cap \ \mathcal{D} .
$$
Then, for any $(y, \eta) \in \mathcal{W}$ and $f \in \mathbb{B}_{\Lip (X, C(K))}$, we have 
\begin{equation*}\begin{split}
 \bigl| (Tf)(y, \eta) & - (Tf)(y_0, \eta) \bigr| 
   \leq \bigl\| (Tf)_y - (Tf)_{y_0} \bigr\|_{C(M)} 
     \leq  \mathcal{L}_{Y, C(M)} (Tf) \: d_Y (y, y_0)  \\
  & \leq  \| Tf \|_{\Lip (Y, C(M))} \: \bigl( \varepsilon / \| T \| \bigr) 
     \leq  \| T \| \, \| f \|_{\Lip (X, C(K))} \: \bigl( \varepsilon / \| T \| \bigr) \leq  \varepsilon ,
\end{split}\end{equation*}
because $y \in V$, while Lemma \ref{La} implies 
$$
\bigl| (Tf)(y_0, \eta) - (Tf)(y_0, \eta_0) \bigr|  < \varepsilon,
$$
because $\eta \in \Theta$. 
Hence the triangle inequality shows that 
$$
 (y, \eta) \in \mathcal{W} \ \ \mbox{implies} \ \sup_{f \in \mathbb{B}_{\Lip(X, C(K))}}
 \bigl| (Tf)(y, \eta) - (Tf)(y_0, \eta_0) \bigr| < 2\varepsilon.
$$
Thus we conclude that $T \bigl( \mathbb{B}_{\Lip (X, C(K))} \bigr)$ is equicontinuous on $Y\times M$. 
\end{proof}

\begin{lem}\label{Lc}
For any $\varepsilon > 0$, there exists a constant $c_{\varepsilon} > 0$ such that 
\begin{equation}\label{eq:Lc-1}
 \| Tf \|_{\Lip (Y, C(M))}  \leq  \varepsilon + c_{\varepsilon} \| Tf \|_{C(Y \times M)} 
\end{equation}
for all $f \in \mathbb{B}_{\Lip (X, C(K))}$.
\end{lem}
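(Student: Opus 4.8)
The plan is to control the Lipschitz seminorm $\mathcal{L}_{Y,C(M)}(Tf)$ by splitting the defining supremum according to whether the two points $y,y'\in Y$ are close together or far apart, mirroring the case analysis already carried out in the proof of Lemma \ref{L2}. Since $\|Tf\|_{\Lip(Y,C(M))}=\|Tf\|_{C(Y\times M)}+\mathcal{L}_{Y,C(M)}(Tf)$, it suffices to produce, for a given $\varepsilon>0$, a constant $c_{\varepsilon}'>0$ with $\mathcal{L}_{Y,C(M)}(Tf)\le\varepsilon+c_{\varepsilon}'\,\|Tf\|_{C(Y\times M)}$ for all $f\in\mathbb{B}_{\Lip(X,C(K))}$; then $c_{\varepsilon}=1+c_{\varepsilon}'$ yields (\ref{eq:Lc-1}).

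First I would fix $\varepsilon>0$ and invoke hypothesis (iii) to obtain $\delta>0$ such that $(y,\eta),(y',\eta)\in\mathcal{D}$ and $0<d_Y(y,y')<\delta$ force $d_X(\varphi(y,\eta),\varphi(y',\eta))/d_Y(y,y')<\varepsilon$. Set $\delta'=\min\{\delta,r,\rho\}$, where $r$ is the separation constant from (ii) and $\rho>0$ is provided by Lemma \ref{L1}. Fix $\eta\in M$ and $y\neq y'$, and examine the quotient $|(Tf)(y,\eta)-(Tf)(y',\eta)|/d_Y(y,y')$ in the range $0<d_Y(y,y')<\delta'$. The mixed case $(y,\eta)\in\mathcal{D}$, $(y',\eta)\notin\mathcal{D}$ cannot occur, since Lemma \ref{L1} would give $d_Y(y,y')\ge\rho\ge\delta'$; the case with both points outside $\mathcal{D}$ gives difference $0$ by (\ref{eq:Form}); and if both points lie in $\mathcal{D}$, then $d_Y(y,y')<r$ together with the separation (\ref{eq:SetDist}) in (ii) forces $y,y'$ into the same clopen piece $V^{\eta}_i$, so that $\psi^{\eta}(y)=\psi^{\eta}(y')$. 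In that last subcase the computation in (\ref{eq:L2-3}) reduces the quotient to $\mathcal{L}_{X,C(K)}(f)\,d_X(\varphi(y,\eta),\varphi(y',\eta))/d_Y(y,y')$, which by (iii) and $\mathcal{L}_{X,C(K)}(f)\le\|f\|_{\Lip(X,C(K))}\le1$ is at most $\varepsilon$.

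For the complementary range $d_Y(y,y')\ge\delta'$ I would use only the crude triangle-inequality bound
$$
 \frac{|(Tf)(y,\eta)-(Tf)(y',\eta)|}{d_Y(y,y')}
 \le \frac{|(Tf)(y,\eta)|+|(Tf)(y',\eta)|}{\delta'}
 \le \frac{2\,\|Tf\|_{C(Y\times M)}}{\delta'}.
$$
Taking the supremum over all $\eta\in M$ and all $y\neq y'$ and combining the two ranges gives $\mathcal{L}_{Y,C(M)}(Tf)\le\max\{\varepsilon,\,2\|Tf\|_{C(Y\times M)}/\delta'\}\le\varepsilon+(2/\delta')\|Tf\|_{C(Y\times M)}$, so (\ref{eq:Lc-1}) holds with $c_{\varepsilon}=1+2/\delta'$.

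The main obstacle is the small-separation range: the argument hinges on verifying that once $d_Y(y,y')$ drops below the threshold $\delta'$, the only surviving contribution is the same-piece subcase, where $\psi^{\eta}$ is locally constant and the difference of $Tf$ collapses to a difference of $f$ in the $X$-variable that hypothesis (iii) makes uniformly small. This is precisely where (ii), Lemma \ref{L1} and (iii) must be used together. Note that hypothesis (iv) plays no role here, since $\eta$ is held fixed throughout the estimate.
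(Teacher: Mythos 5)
Your proposal is correct and takes essentially the same approach as the paper: the paper organizes the estimate by cases on membership in $\mathcal{D}$ and on the pieces $V^{\eta}_i$ (obtaining the constants $2/\delta_{\varepsilon}$, $2/r$, $1/\rho$ and combining them via a maximum), while you merely reorganize the same argument by thresholding on $d_Y(y,y')<\delta'=\min\{\delta,r,\rho\}$ first, using identical ingredients — Lemma \ref{L1}, the separation (\ref{eq:SetDist}) from (ii), hypothesis (iii), and the computation (\ref{eq:L2-3}) — in identical roles. Your observation that (iv) plays no part in this lemma also matches the paper's proof.
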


\begin{proof}
Fix $\varepsilon > 0$.
By (iii), there exists a $\delta_{\varepsilon} > 0$ such that 
\begin{equation}\label{eq:Lc-2}
 (y, \eta), (y', \eta) \in \mathcal{D} \ \mbox{and} \ 0 < d_Y (y, y') < \delta_{\varepsilon} \ \mbox{imply} \
 \frac{d_X \bigl( \varphi (y, \eta), \varphi (y' \eta) \bigr)}{d_Y (y, y')} < \varepsilon . 
\end{equation}
Let $f \in \mathbb{B}_{\Lip (X, C(K))}$, and choose $(y, \eta), (y', \eta) \in Y \times M$ with $y \neq y'$.
We consider three cases.

[\textsf{Case 1}] $(y, \eta), (y', \eta) \in \mathcal{D}$ : 
By (ii) in Theorem \ref{Thm1}, $y \in V^{\eta}_i$ and $y' \in V^{\eta}_{i'}$ for some $i, i' \in \{ 1, \ldots, n_{\eta} \}$.
We first consider the case $i = i'$.
If $d_Y (y, y') < \delta_{\varepsilon}$, then the computation (\ref{eq:L2-3}) 
using (\ref{eq:Lc-2}) instead of (\ref{eq:Phi}) gives
\begin{equation}\label{eq:Lc-3}\begin{split}
 \bigl| (Tf)(y, \eta) - (Tf)(y', \eta) \bigr| 
  & \leq  \mathcal{L}_{X, C(K)} (f) \, d_X \bigl( \varphi (y, \eta), \varphi (y', \eta) \bigr)  \\
  & \leq  \mathcal{L}_{X, C(K)} (f) \: \varepsilon \: d_Y (y, y')  \\
  & \leq  \| f \|_{\Lip (X, C(K))} \: \varepsilon \: d_Y (y, y')  
     \leq  \varepsilon \: d_Y (y, y') .
\end{split}\end{equation}
On the other hand, if $d_Y (y, y') \geq \delta_{\varepsilon}$, then 
\begin{equation}\label{eq:Lc-4}
 \frac{\bigl| (Tf)(y, \eta) - (Tf)(y', \eta) \bigr|}{d_Y (y, y')}  \leq  \frac{|(Tf)(y, \eta)| + |(Tf)(y', \eta)|}{\delta_{\varepsilon}} 
  \leq  \frac{2 \| Tf \|_{C(Y \times M)}}{\delta_{\varepsilon}} . 
\end{equation}
In case that $i \neq i'$, 
we have $d_Y (y, y') \geq r$ by (\ref{eq:SetDist}), 
and so 
\begin{equation}\label{eq:Lc-5}
 \frac{\bigl| (Tf)(y, \eta) - (Tf)(y', \eta) \bigr|}{d_Y (y, y')}  \leq  \frac{2 \| Tf \|_{C(Y \times M)}}{r} . 
\end{equation} 

[\textsf{Case 2}] $(y, \eta) \in \mathcal{D}$ and $(y', \eta) \in (Y \times M) \smallsetminus \mathcal{D}$ : 
Then Lemma \ref{L1} says that $d_Y (y, y') \geq \rho$ and so 
\begin{equation}\label{eq:Lc-6}
 \frac{\bigl| (Tf)(y, \eta) - (Tf)(y', \eta) \bigr|}{d_Y (y, y')}  \leq  \frac{| (Tf)(y, \eta) |}{\rho} \leq  \frac{\| Tf \|_{C(Y \times M)}}{\rho} .
\end{equation} 
 
 [\textsf{Case 3}] $(y, \eta), (y', \eta) \in (Y \times M) \smallsetminus \mathcal{D}$ : By (\ref{eq:Form}), 
\begin{equation}\label{eq:Lc-7}
 (Tf)(y, \eta) - (Tf)(y' \eta) =  0 .
\end{equation}

Now, put $\tilde{c}_{\varepsilon} = \max \bigl\{ 2 / \delta_{\varepsilon}, 2/r, 1/\rho \bigr\}$.
We combine (\ref{eq:Lc-3})--(\ref{eq:Lc-7}) to get 
\begin{equation*}
\mathcal{L}_{Y, C(M)} (Tf) 
   =   \sup_{y, y' \in Y \atop y \neq y'} \sup_{\eta \in M}  \frac{\bigl| (Tf)(y, \eta) - (Tf)(y' \eta) \bigr|}{d_Y (y, y')}  \\ 
   \leq  \max \bigl\{ \varepsilon,  \tilde{c}_{\varepsilon} \, \| Tf \|_{C(Y \times M)} \bigr\}.
\end{equation*}
Hence
$$
 \| Tf \|_{\Lip (Y, C(M))}
  \leq \varepsilon + ( \tilde{c}_{\varepsilon} + 1 ) \| Tf \|_{C(Y \times M)},
$$
which is (\ref{eq:Lc-1}).
\end{proof}

\begin{lem}\label{Ld}
In $\Lip (Y, C(M))$, the closure of $T \bigl( \mathbb{B}_{\Lip (X, C(K))} \bigr)$ is compact.
\end{lem}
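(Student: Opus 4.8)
The plan is to show that $T \bigl( \mathbb{B}_{\Lip (X, C(K))} \bigr)$ is totally bounded in the norm of $\Lip (Y, C(M))$; since $\Lip (Y, C(M))$ is a Banach space, this immediately yields that its closure is compact. The two preceding lemmas supply exactly the two ingredients needed: Lemma \ref{Lb} gives compactness (hence total boundedness) in the weaker norm of $C(Y \times M)$, while Lemma \ref{Lc} provides an interpolation-type inequality that controls the full Lipschitz norm by the sup norm up to an arbitrarily small additive error.

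First I would upgrade the estimate (\ref{eq:Lc-1}) from single functions to differences. Since $\mathbb{B}_{\Lip (X, C(K))}$ is convex and balanced, for any $f, g \in \mathbb{B}_{\Lip (X, C(K))}$ the function $(f-g)/2$ again lies in $\mathbb{B}_{\Lip (X, C(K))}$. Applying (\ref{eq:Lc-1}) to $(f-g)/2$ and using the linearity of $T$, I obtain, for every $\varepsilon > 0$ with the corresponding constant $c_{\varepsilon}$,
$$
 \| Tf - Tg \|_{\Lip (Y, C(M))} \leq 2\varepsilon + c_{\varepsilon} \, \| Tf - Tg \|_{C(Y \times M)}
$$
for all $f, g \in \mathbb{B}_{\Lip (X, C(K))}$.

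Next I would combine this difference inequality with Lemma \ref{Lb}. Fix $\varepsilon > 0$ and let $c_{\varepsilon}$ be as above. By Lemma \ref{Lb}, $T \bigl( \mathbb{B}_{\Lip (X, C(K))} \bigr)$ is totally bounded in $C(Y \times M)$, so there exist finitely many $f_1, \ldots, f_N \in \mathbb{B}_{\Lip (X, C(K))}$ such that every $f \in \mathbb{B}_{\Lip (X, C(K))}$ satisfies $\| Tf - Tf_j \|_{C(Y \times M)} < \varepsilon / c_{\varepsilon}$ for some $j$. Substituting this into the difference inequality gives
$$
 \| Tf - Tf_j \|_{\Lip (Y, C(M))} < 2\varepsilon + c_{\varepsilon} \cdot \frac{\varepsilon}{c_{\varepsilon}} = 3\varepsilon,
$$
so the $\Lip$-balls of radius $3\varepsilon$ centred at $Tf_1, \ldots, Tf_N$ cover $T \bigl( \mathbb{B}_{\Lip (X, C(K))} \bigr)$. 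As $\varepsilon > 0$ was arbitrary, $T \bigl( \mathbb{B}_{\Lip (X, C(K))} \bigr)$ is totally bounded in $\Lip (Y, C(M))$, and completeness of $\Lip (Y, C(M))$ finishes the proof.

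The conceptual point — and the reason Lemmas \ref{Lb} and \ref{Lc} were isolated beforehand — is that total boundedness in the sup norm does not by itself imply total boundedness in the strictly stronger Lipschitz norm. The inequality (\ref{eq:Lc-1}) is precisely the bridge: it shows that the Lipschitz seminorm of any element of the image can be made uniformly small on the unit ball except for a contribution governed by the sup norm, so a finite sup-norm net can be promoted, up to an arbitrarily small Lipschitz-norm error, into a Lipschitz-norm net. Beyond assembling these two lemmas through the balanced-set trick, I do not anticipate any genuine obstacle.
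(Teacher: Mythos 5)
Your proof is correct and uses exactly the same two ingredients as the paper — Lemma \ref{Lb} for compactness in $C(Y \times M)$ and the inequality (\ref{eq:Lc-1}) applied to halved differences $(f-g)/2$ — so it is essentially the paper's argument, merely phrased via total boundedness and finite $3\varepsilon$-nets instead of the paper's sequential version (extract a sup-norm convergent subsequence, then show it is Cauchy in the Lipschitz norm). The two formulations are interchangeable here, and your net-based variant is, if anything, slightly cleaner since it avoids naming a limit function.
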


\begin{proof}
Let $\{ f_n \}$ be an arbitrary sequence in $\mathbb{B}_{\Lip (X, C(K))}$.
By Lemma \ref{Lc}, there exist a subsequence $\{ f_{n_i} \}$ and 
a function $g \in C(Y \times M)$ such that 
$\bigl\| T f_{n_i} - g \bigr\|_{C(Y \times M)} \rightarrow 0$.
To see that $\bigl\{ T f_{n_i} \bigr\}$ is a Cauchy sequence in 
$\Lip (Y, C(M))$, let $\varepsilon > 0$.
Since $\bigl\{ T f_{n_i} \bigr\}$ is a Cauchy sequence in $C(Y \times M)$, 
there exists an $N$ such that $i, j \geq N$ implies 
$\bigl\| T f_{n_i} - T f_{n_j} \bigr\|_{C(Y \times M)} < \varepsilon / c_{\varepsilon}$.
Substituting $f = \bigl( f_{n_i} - f_{n_j} \bigr)/2$ in (\ref{eq:Lc-1}), 
we see 
$$
 i, j \geq N \ \mbox{implies} \ 
 \bigl\| T f_{n_i} - T f_{n_j} \bigr\|_{\Lip (Y, C(M))} \leq 
 2 \varepsilon + c_{\varepsilon} \, \bigl\| T f_{n_i} - T f_{n_j} \bigr\|_{C(Y \times M)}
 < 3 \varepsilon .
$$
Hence $\bigl\{ T f_{n_i} \bigr\}$ is a Cauchy sequence in $\Lip (Y, C(M))$, 
and so it converges to some function in $\Lip (Y, C(M))$.
Thus we conclude that the closure of $T \bigl( \mathbb{B}_{\Lip (X, C(K))} \bigr)$
 is compact in $\Lip (Y, C(M))$.
\end{proof}

Lemma \ref{Ld} says that $T$ is a compact operator 
from $\Lip (X, C(K))$ into $\Lip (Y, C(M))$, 
and the ``if'' part was proved.

\subsection{{\sc Proof of Necessity}}

In the sequels, we suppose that $T$ is compact.

\begin{lem}
$\varphi$ satisfies {\rm (iii)}.
\end{lem}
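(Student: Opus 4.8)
The plan is to prove the lemma by contraposition: assuming (iii) fails, I will exhibit a sequence in the unit ball $\mathbb{B}_{\Lip(X,C(K))}$ whose image under $T$ has no subsequence convergent in $\Lip(Y,C(M))$, contradicting the compactness of $T$. The mechanism is to use ``tent functions'' of vanishing height but unit slope, so that their images are small in the sup norm while retaining a difference quotient bounded away from $0$ — precisely the obstruction to norm-compactness.

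First I unwind the negation of (iii): there exist $\varepsilon_0 > 0$ and points $(y_n, \eta_n), (y'_n, \eta_n) \in \mathcal{D}$ with $y_n \neq y'_n$, $d_Y(y_n, y'_n) \to 0$, and
$$
 \frac{d_X(\varphi(y_n, \eta_n), \varphi(y'_n, \eta_n))}{d_Y(y_n, y'_n)} \geq \varepsilon_0 \qquad (n = 1, 2, \dots).
$$
Write $x_n = \varphi(y_n, \eta_n)$, $x'_n = \varphi(y'_n, \eta_n)$ and $\delta_n = d_X(x_n, x'_n)$. Property (i) of Theorem~\ref{Thm1} gives the upper bound $\delta_n \leq L\, d_Y(y_n, y'_n) \to 0$, while the displayed inequality gives $\delta_n \geq \varepsilon_0\, d_Y(y_n, y'_n) > 0$; hence $x_n \neq x'_n$ and $\delta_n \downarrow 0$.

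Next I construct the test functions. For each $n$ set $g_n(x) = \max\{0, \delta_n - d_X(x, x_n)\}$ on $X$; this is $1$-Lipschitz with $\|g_n\|_{C(X)} = \delta_n$, so $\|g_n\|_{\Lip(X)} \leq \delta_n + 1$. Extend it by $\hat g_n(x, \xi) = g_n(x)$ and put $f_n = \hat g_n/(\delta_n + 1)$, so that $f_n \in \mathbb{B}_{\Lip(X,C(K))}$. Since $f_n$ is independent of the $K$-variable, (\ref{eq:Form}) gives $(Tf_n)(y, \eta) = g_n(\varphi(y, \eta))/(\delta_n + 1)$ on $\mathcal{D}$. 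Consequently $\|Tf_n\|_{C(Y \times M)} \leq \|f_n\|_{C(X \times K)} = \delta_n/(\delta_n + 1) \to 0$; and evaluating at $\eta = \eta_n$, where $g_n(x_n) = \delta_n$ and $g_n(x'_n) = 0$,
$$
 \mathcal{L}_{Y, C(M)}(Tf_n) \geq \frac{\bigl| (Tf_n)(y_n, \eta_n) - (Tf_n)(y'_n, \eta_n) \bigr|}{d_Y(y_n, y'_n)} = \frac{1}{\delta_n + 1} \cdot \frac{d_X(x_n, x'_n)}{d_Y(y_n, y'_n)} \geq \frac{\varepsilon_0}{\delta_n + 1},
$$
which exceeds $\varepsilon_0/2$ for all large $n$.

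Finally I reach the contradiction. As $T$ is compact and $\{f_n\} \subset \mathbb{B}_{\Lip(X,C(K))}$, some subsequence $\{Tf_{n_i}\}$ converges in $\Lip(Y, C(M))$ to a limit $h$. Convergence in the Lipschitz norm forces convergence in $\|\cdot\|_{C(Y \times M)}$, so the sup-norm estimate above yields $h = \lim_i Tf_{n_i} = 0$; then $\mathcal{L}_{Y, C(M)}(Tf_{n_i}) \leq \|Tf_{n_i}\|_{\Lip(Y, C(M))} \to 0$, contradicting the lower bound $\varepsilon_0/2$. Hence (iii) must hold. I expect the only points needing care to be the verification that $f_n$ lies in the unit ball and that the gap $g_n(x_n) - g_n(x'_n)$ equals $\delta_n$ (which relies on $x_n \neq x'_n$, guaranteed by the lower bound $\delta_n \geq \varepsilon_0 d_Y(y_n,y'_n)$); the rest is a short computation, with the essential input being that (i) forces $\delta_n \to 0$ so that $\|Tf_n\|_{C(Y\times M)} \to 0$.
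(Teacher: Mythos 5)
Your proof is correct, and it follows the same overall strategy as the paper's: negate (iii) to get sequences $(y_n,\eta_n),(y'_n,\eta_n)\in\mathcal{D}$ with vanishing $d_Y(y_n,y'_n)$ and difference quotient $\geq\varepsilon_0$, build unit-ball test functions depending only on the $X$-variable whose images under $T$ have vanishing sup norm but Lipschitz seminorm bounded below, and use compactness of $T$ to extract a subsequence converging in $\Lip(Y,C(M))$, whose limit must be $0$ by the sup-norm estimate --- contradicting the seminorm lower bound. Where you genuinely differ is in the construction of the test functions, and your choice is simpler. The paper uses $f_n(x)=\chi_n\bigl(d_X(x,z'_n)\bigr)$ with $\chi_n(t)=\frac{1}{2n}(1-e^{-nt})$, invokes the mean value theorem twice, and must arrange $d_Y(y_n,y'_n)<1/n^2$ so that, together with (i), $n\sigma_n\to 0$ keeps the slope factor $e^{-n\sigma_n}/2$ bounded below; the heights $1/(2n)$ then force the limit to vanish. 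Your tent functions $g_n(x)=\max\{0,\delta_n-d_X(x,x_n)\}$, normalized by $\delta_n+1$, achieve both effects at once: the height $\delta_n=d_X(x_n,x'_n)\leq L\,d_Y(y_n,y'_n)\to 0$ (this is where you, like the paper, crucially use (i)) kills $\|Tf_n\|_{C(Y\times M)}$, while $g_n(x_n)-g_n(x'_n)=\delta_n$ exactly realizes the difference quotient, yielding the clean lower bound $\varepsilon_0/(\delta_n+1)\to\varepsilon_0$ with no calculus and no special rate $1/n^2$. Two cosmetic remarks: $g_n(x'_n)=\max\{0,\delta_n-\delta_n\}=0$ holds identically, so it does not ``rely on $x_n\neq x'_n$'' as you say (the lower bound $\delta_n\geq\varepsilon_0\,d_Y(y_n,y'_n)>0$ is needed only so the quotient is meaningful); and $\delta_n\downarrow 0$ should read $\delta_n\to 0$, since no monotonicity is available or needed. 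Neither affects the argument, which is complete and trades the paper's analytic bookkeeping for a piecewise-linear construction that makes transparent exactly what (i) is used for.
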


\begin{proof}
Assume, to reach a contradiction, that $\varphi$ does not satisfy (iii).
Then there exist an $\varepsilon_0 > 0$ and two sequences $\{ ( y_n, \eta_n ) \}$ and $\{ ( y'_n, \eta_n ) \}$  in $\mathcal{D}$ 
such that 
$$
 0 < d_Y ( y_n, y'_n ) < \frac{1}{n^2} \qquad \mbox{and} \qquad 
 \frac{d_X \bigl( \varphi (y_n, \eta_n), \varphi (y'_n, \eta_n ) \bigr)}{d_Y (y_n, y'_n)}  \geq  \varepsilon_0 .
$$
Put $z_n = \varphi (y_n, \eta_n)$ and $z'_n = \varphi (y'_n, \eta_n)$ for $n=1, 2, \ldots$.
In order to arrange the distance $d_X$, we here introduce a function $\chi_n$ :
$$
 \chi_n (t) = \frac{1}{2n} ( 1-e^{-nt} )  \qquad ( t \in [0, \infty) ) .
$$
Clearly, $0 \leq \chi_n \leq 1/2$ and $\chi_n$ is differentiable and $\chi_n ' (t) = e^{-nt} /2$.
Define
$$
 f_n (x) = \chi_n \bigl( d_X (x, z'_n) \bigr)  \qquad  (x \in X).
$$
For any $x, x' \in X$ with $x \neq x'$, 
the mean value theorem gives a point $s_n$ between $d_X (x, z'_n)$ and $d_X (x', z'_n)$ such that
$$
 \chi_n \bigl( d_X (x, z'_n) \bigr) - \chi_n \bigl( d_X (x', z'_n) \bigr) 
 = \chi'_n (s_n) \, \big( d_X (x, z'_n) - d_X (x', z'_n ) \bigr) ,
$$
and so
$$
 | f_n (x) - f_n (x') | =  
 | \chi'_n (s_n) | \, \big| d_X (x, z'_n) - d_x (x', z'_n ) \bigr|  
    \leq \frac{e^{-ns_n}}{2} d_X (x, x') .
$$
Hence $f_n \in \Lip (X)$ and $\| f_n \|_{\Lip (X)} = \| f_n \|_{C(X)} + \mathcal{L}_{X, \mathbb{C}} (f_n) 
 \leq \frac{1}{2n} + \frac{e^{-ns_n}}{2} \leq 1$.

Now put $\hat{f}_n (x, \xi) = f_n (x)$ for all $(x, \xi) \in X \times K$.
Then $\hat{f}_n \in \Lip (X, C(K))$ and $\| \hat{f}_n \|_{\Lip (X, C(K))} \leq 1$, that is, $\hat{f}_n \in \mathbb{B}_{\Lip (X, C(K))}$.

Next we estimate the norm $\| T \hat{f}_n \|_{\Lip (Y, C(M))}$.
We use the mean value theorem again, 
we compute as follows:
\begin{equation*}\begin{split}
 \bigl| (T \hat{f}_n)(y_n, \eta_n) - (T\hat{f}_n)(y'_n, \eta_n) \bigr| 
 & = \bigl| \hat{f}_n \bigl( \varphi (y_n, \eta_n), \psi (y_n, \eta_n) \bigr) 
          - \hat{f}_n \bigl( \varphi (y'_n, \eta_n), \psi(y'_n, \eta_n \bigr) \bigr| \\
 & = \bigl| f_n (z_n) - f_n (z'_n) \bigr| 
  =  \bigl| \chi_n \bigl( d_X (z_n, z'_n) \bigr) - \chi_n (0) \bigr| \\
  & = \bigl| \chi'_n ( \sigma_n )  \bigr| \, \bigl| d_X (z_n, z'_n) - 0 \bigr|  \\
  & = \frac{e^{-n \sigma_n}}{2} \: d_X \bigl( \varphi (y_n, \eta_n), \varphi (y'_n, \eta_n) \bigr)  \\
  & \geq \frac{e^{-n \sigma_n}}{2} \: \varepsilon_0 \, d_Y (y_n, y'_n) ,
 \end{split}\end{equation*}
where $0 \leq \sigma_n \leq d_X (z_n, z'_n)$.
Hence 
\begin{equation}\label{eq:Le-1}
 \bigl\| T\hat{f}_n \bigr\|_{\Lip (Y, C(M))} 
  \geq  \mathcal{L}_{Y, C(M)} ( T \hat{f}_n )  
  \geq \frac{\bigl\| ( T \hat{f}_n )_{y_n} - ( T \hat{f}_n )_{y'_n} \bigr\|_{C(M)}}{d_Y (y_n, y'_n)}
  \geq  \frac{e^{-n \sigma _n}}{2} \varepsilon_0 .\
\end{equation}
While (\ref{eq:Phi}) in Theorem \ref{Thm1} implies 
$$
 0 \leq \sigma_n \leq d_X ( z_n, z'_n ) 
 = d_X \bigl( \varphi (y_n, \eta_n), \varphi (y'_n, \eta_n) \bigr)
 \leq L \: d_Y (y_n, y'_n) \leq L \frac{1}{n^2}, 
$$
and so $n \sigma_n \rightarrow 0$.
Thus (\ref{eq:Le-1}) implies 
\begin{equation}\label{eq:Le-2}
 \liminf _{n \to \infty} \bigl\| T \hat{f}_n  \bigr\|_{\Lip (Y, C(M))}   \geq  \frac{\varepsilon_0}{2}
\end{equation}

Recall that $T$ is compact.
Since $\{ \hat{f}_n \} \subset \mathbb{B}_{\Lip (X, C(K))}$, 
there exist a subsequence $\{ \hat{f}_{n_i} \}$ and a function $g \in \Lip (Y, C(M))$ such that 
$\bigl\| T \hat{f}_{n_i} - g \bigr\|_{\Lip (Y, C(M))} \rightarrow 0$.
Since $\bigl\| T \hat{f}_{n_i} - g \bigr\|_{C(Y \times M)} 
 \leq \bigl\| T \hat{f}_{n_i} - g \bigr\|_{\Lip (Y, C(M))}$, 
we have $(T \hat{f}_{n_i}) (y, \eta) \rightarrow g(y, \eta)$ 
for each $(y, \eta) \in Y \times M$.
If $(y, \eta) \in \mathcal{D}$, then 
$$
 \bigl| (T \hat{f}_{n_i}) (y, \eta) \bigr| = \bigl| \hat{f}_{n_i} \bigl( \varphi (y, \eta), \psi (y,\eta) \bigr) \bigr| 
 = \bigl| f_{n_i} \bigl( \varphi (y, \eta) \bigr) \bigr| \leq \frac{1}{2{n_i}} \ \rightarrow \ 0, 
$$
while if $(y, \eta) \in (Y \times M) \smallsetminus \mathcal{D}$, then 
$ (T \hat{f}_{n_i})(y, \eta) = 0$.
As a result, we have $g(y, \eta) = 0$ for all $(y, \eta) \in Y \times M$, 
and so 
$$
 \bigl\| T\hat{f}_{n_i} \bigr\|_{\Lip (Y, C(M))} \ \rightarrow \ 0.
$$ 
This contradicts (\ref{eq:Le-2}).
\end{proof}

Fix $y \in Y$ and put $\mathcal{D}_y = \{ \eta \in M : (y, \eta) \in \mathcal{D} \}$.

\begin{lem}\label{Lx}
For any $\eta_0 \in \mathcal{D}_y$, there exists an open neighborhood of $\eta_0$ in $\mathcal{D}_y$ on which $\psi_y$ is constant.
\end{lem}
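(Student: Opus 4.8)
The plan is to argue by contradiction, deducing from the failure of local constancy that the family $T\bigl(\mathbb{B}_{\Lip(X,C(K))}\bigr)$ is not equicontinuous at the point $(y,\eta_0)$, which is incompatible with the compactness of $T$. So I would suppose that no neighborhood of $\eta_0$ in $\mathcal{D}_y$ is one on which $\psi_y$ is constant. Writing $\xi_0=\psi_y(\eta_0)$, this hypothesis means precisely that every open neighborhood $W$ of $\eta_0$ in $\mathcal{D}_y$ contains a point $\eta'$ with $\psi_y(\eta')\neq\xi_0$; I note here that $\mathcal{D}_y$ is clopen in $M$ because $\mathcal{D}$ is clopen in $Y\times M$.

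Next I would manufacture, for an arbitrary open neighborhood $\mathcal{W}$ of $(y,\eta_0)$ in $Y\times M$, a test function in the unit ball that separates $(y,\eta_0)$ from a nearby point by a fixed amount. Given $\mathcal{W}$, the set $W=\{\eta\in\mathcal{D}_y:(y,\eta)\in\mathcal{W}\}$ is an open neighborhood of $\eta_0$ in $\mathcal{D}_y$, so by the hypothesis there is an $\eta'\in W$ with $\xi':=\psi_y(\eta')\neq\xi_0$. By Urysohn's lemma I choose $u\in C(K)$ with $0\le u\le 1$, $u(\xi')=1$ and $u(\xi_0)=0$, and lift it to $\tilde u(x,\xi)=u(\xi)$ on $X\times K$; exactly as in the proof of Lemma~\ref{L6} one has $\tilde u\in\Lip(X,C(K))$ with $\|\tilde u\|_{\Lip(X,C(K))}=\|u\|_{C(K)}=1$, so $\tilde u\in\mathbb{B}_{\Lip(X,C(K))}$. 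Since $(y,\eta')$ and $(y,\eta_0)$ both lie in $\mathcal{D}$, the form (\ref{eq:Form}) gives $(T\tilde u)(y,\eta')=u(\psi_y(\eta'))=1$ and $(T\tilde u)(y,\eta_0)=u(\psi_y(\eta_0))=0$, whence $\bigl|(T\tilde u)(y,\eta')-(T\tilde u)(y,\eta_0)\bigr|=1$ with $(y,\eta')\in\mathcal{W}$. As $\mathcal{W}$ was arbitrary, this exhibits the value $\varepsilon_0=1$ for which no neighborhood of $(y,\eta_0)$ simultaneously controls all the differences, i.e.\ $T\bigl(\mathbb{B}_{\Lip(X,C(K))}\bigr)$ fails to be equicontinuous at $(y,\eta_0)$.

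Finally I would close the loop using compactness. Since $T$ is compact, $T\bigl(\mathbb{B}_{\Lip(X,C(K))}\bigr)$ is relatively compact in $\Lip(Y,C(M))$; because the inclusion of $\Lip(Y,C(M))$ into $C(Y\times M)$ is norm-nonincreasing (Proposition~\ref{Prop:LipFn} gives $\|g\|_{C(Y\times M)}=\|g\|_{C(Y,C(M))}\le\|g\|_{\Lip(Y,C(M))}$), the set $T\bigl(\mathbb{B}_{\Lip(X,C(K))}\bigr)$ is relatively compact in $C(Y\times M)$ as well, and hence equicontinuous by the necessity half of the Arzel\'a--Ascoli theorem (\cite[Theorem IV.6.7]{DS}). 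This contradicts the previous paragraph, so some neighborhood of $\eta_0$ in $\mathcal{D}_y$ must be one on which $\psi_y$ is constant.

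I expect the main obstacle to be conceptual rather than computational: recognizing that the relevant manifestation of non-compactness here is the breakdown of equicontinuity in the $M$-variable, which is the direction carrying no Lipschitz control, and that a \emph{single} Urysohn bump per neighborhood suffices, so that no sequences or pairwise disjoint supports are needed. This is what lets the argument sidestep the possible non-metrizability of $K$ and $M$. The only routine points that must be checked with care are that $\mathcal{D}_y$ is clopen and that relative compactness genuinely transfers from the $\Lip$-norm to the sup-norm.
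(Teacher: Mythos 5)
Your proof is correct and follows essentially the same route as the paper: compactness of $T$ yields, via the necessity direction of the Arzel\'a--Ascoli theorem, equicontinuity of the image of the unit ball, and a single Urysohn bump $u$ lifted to $\tilde u(x,\xi)=u(\xi)$ produces an oscillation of $1$ that contradicts it whenever $\psi_y$ is non-constant near $\eta_0$. The only (immaterial) difference is packaging: the paper first composes $T$ with the restriction operator $P:\Lip(Y,C(M))\rightarrow C(\mathcal{D}_y)$, $(Pg)(\eta)=g(y,\eta)$, and applies Arzel\'a--Ascoli to the compact operator $S=PT$ on $\mathcal{D}_y$, whereas you apply it to $T\bigl(\mathbb{B}_{\Lip(X,C(K))}\bigr)$ inside $C(Y\times M)$ at the point $(y,\eta_0)$ and run the argument as a direct contradiction against equicontinuity.
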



\begin{proof}
Since $\mathcal{D}_y$ is a compact subset of $M$,
we can treat the Banach algebra $C( \mathcal{D}_y )$ 
and a projection $P$ from $\Lip (Y, C(M))$ into $C( \mathcal{D}_y )$ : 
$$
 (Pg)(\eta) = g(y, \eta)  \qquad  
 \bigl( \eta \in \mathcal{D}_y, \ g \in \Lip (Y, C(M)) \bigr).
$$
Clearly $P$ is a bounded linear operator 
from $\Lip (Y, C(M))$ into $C( \mathcal{D}_y )$.

Now put $S = PT$.
Since $T$ is compact, 
$S$ is a compact operator from $\Lip (X, C(K))$ into $C( \mathcal{D}_y )$.
Hence Arzel\'{a}-Ascoli theorem says that 
$S \bigl( \mathbb{B}_{\Lip (X,C(K))} \bigr)$ is equicontinuous on $\mathcal{D}_y$. 
Hence there exists an open neighborhood $\Theta$ of $\eta_0$ such that 
\begin{equation}\label{eq:Lx-1}
 \eta \in \Theta \ \mbox{implies} \ 
 \sup_{f \in \mathbb{B}_{\Lip (X, C(K))}} 
 \bigl| (Sf)(\eta) - (Sf)(\eta_0) \bigr| < \frac{1}{2}.
\end{equation}

Conversely, assume that there exist $\eta_1 \in \Theta$ such that 
$\psi_y (\eta_1) \neq \psi_y (\eta_0)$.
By Urysohn's lemma, there exists a $u \in C(M)$ such that 
$0 \leq u \leq 1$, $u \bigl( \psi_y (\eta_1) \bigr) = 1$ and 
$\psi \bigl( \psi_y (\eta_0) \bigr) = 0$.
Put $\tilde{u} (x, \xi) = u(\xi)$ for all $(x, \xi) \in X \times K$.
Then $\tilde{u} \in \mathbb{B}_{\Lip (X, C(K))}$.
Hence (\ref{eq:Lx-1}) implies 
$$
 \bigl| (S \tilde{u})(\eta_1) - (S\tilde{u})(\eta_0) \bigr| 
 < \frac{1}{2}.
$$
But 
\begin{equation*}\begin{split}
 \bigl| (S \tilde{u})(\eta_1) - (S \tilde{u})(\eta_0) \bigr| 
 & = \bigl| (PT \tilde{u})(\eta_1) - (PT \tilde{u})(\eta_0) \bigr| \\
 & = \bigl| (T \tilde{u})(y, \eta_1) - (T \tilde{u})(y, \eta_0) \bigr| \\
 & = \bigl| \tilde{u} \bigl( \varphi (y, \eta_1), \psi (y, \eta_1) \bigr)
     - \tilde{u} \bigl( \varphi (y, \eta_0), \psi (y, \eta_0) \bigr) \bigr| \\
 & = \bigl| u( \psi (y, \eta_1) ) - u( \psi (y, \eta_0) ) \bigr| 
   = \bigl| u \bigl( \psi_y (\eta_1) \bigr) - u \bigl( \psi_y (\eta_0) \bigr) \bigr| = 1
\end{split}\end{equation*}
a contradiction.
Thus we conclude that $\psi_y$ is constant on $\Theta$.\
\end{proof}

\begin{lem}\label{Ly}
$\psi$ satisfies {\rm (iv)}.
\end{lem}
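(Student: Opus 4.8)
The plan is to mimic the argument that $\psi$ satisfies {\rm (ii)} in Section~3, but to replace the uniform separation step (which there rested on Lemma~\ref{L6}) by the purely local statement just established in Lemma~\ref{Lx}. Fix $y \in Y$. Since $\mathcal{D}$ is clopen in $Y \times M$, its slice $\mathcal{D}_y = \{\eta \in M : (y,\eta) \in \mathcal{D}\}$ is a clopen, hence compact, subset of $M$. On $\mathcal{D}_y$ the map $\psi_y$ is continuous, being the restriction of the continuous mapping $\psi$ of Lemma~\ref{L4}.

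For each $\eta \in \mathcal{D}_y$ I would consider the level set
$$
 \Omega_\eta = \{ \zeta \in \mathcal{D}_y : \psi_y(\zeta) = \psi_y(\eta) \} .
$$
Any two such sets are either disjoint or equal, and together they cover $\mathcal{D}_y$, so $\{\Omega_\eta\}$ is a partition of $\mathcal{D}_y$ on each piece of which $\psi_y$ is by construction constant. The key step is to check that every $\Omega_\eta$ is clopen. Closedness is immediate from the continuity of $\psi_y$. For openness, let $\zeta \in \Omega_\eta$; by Lemma~\ref{Lx} there is an open neighborhood of $\zeta$ in $\mathcal{D}_y$ on which $\psi_y$ is constant, necessarily with the value $\psi_y(\zeta) = \psi_y(\eta)$, so this neighborhood is contained in $\Omega_\eta$. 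Thus $\Omega_\eta$ is open in $\mathcal{D}_y$, hence clopen in $\mathcal{D}_y$; and since $\mathcal{D}_y$ is itself clopen in $M$, it follows that $\Omega_\eta$ is clopen in $M$.

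Finally I would invoke compactness: the family $\{\Omega_\eta\}_{\eta \in \mathcal{D}_y}$ is an open cover of the compact set $\mathcal{D}_y$, so it admits a finite subcover; because the $\Omega_\eta$ form a partition, only finitely many of them are distinct, say $\Omega_y^1, \ldots, \Omega_y^{n_y}$. These are finitely many pairwise disjoint clopen sets with union $\mathcal{D}_y$, on each of which $\psi_y$ is constant, which is precisely {\rm (iv)}.

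I do not expect a genuine obstacle at this stage: all of the analytic content has already been discharged in Lemma~\ref{Lx} (and, through the compact operator $S = PT$ and the Arzel\`a--Ascoli theorem, in the compactness of $T$), so what remains is the standard packaging of the clopen level sets of a locally constant map on a compact space. The only point to watch is the two-step clopenness---first clopen in $\mathcal{D}_y$, then clopen in $M$---which uses that $\mathcal{D}_y$ is itself clopen in $M$; apart from this bookkeeping the proof is routine.
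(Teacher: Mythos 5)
Your proof is correct and follows essentially the same route as the paper: the same level sets $\Omega_\eta = \{\zeta \in \mathcal{D}_y : \psi_y(\zeta) = \psi_y(\eta)\}$, closed by continuity of $\psi_y$, open by Lemma~\ref{Lx}, and reduced to finitely many pairwise disjoint pieces by compactness of $\mathcal{D}_y$. Your explicit two-step clopenness check (clopen in $\mathcal{D}_y$, then in $M$ because $\mathcal{D}_y$ is clopen in $M$) is in fact slightly more careful than the paper's wording.
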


\begin{proof}
For any $\eta \in \mathcal{D}_y$, put 
$$
 \Omega^{\eta} = \bigl\{ \zeta \in \mathcal{D}_y : \psi_y (\zeta) = \psi_y (\eta) \bigr\}.
$$
Clearly, $\psi_y$ is constant on $\Omega^{\eta}$.
Also, we have 
\begin{equation}\label{eq:Lg-1}
 \Omega^{\eta} \cap \Omega^{\eta'} \neq \emptyset 
 \ \Longrightarrow \ \Omega^{\eta} = \Omega^{\eta'}.
\end{equation}

Since $\psi_y$ is continuous, 
$\Omega^{\eta}$ is a closed subset of $\mathcal{D}_y$.
Also we can easily see that Lemma \ref{Lx} implies that 
$\Omega^{\eta}$ is an open subset of $\mathcal{D}_y$.
Thus $\Omega^{\eta}$ is a clopen subset of $M$.

Note that
$$
 \mathcal{D}_y = \bigcup_{\eta \in \mathcal{D}_y} \Omega^{\eta} .
$$
Since $\mathcal{D}_y$ is compact, we can select finitely many 
$\eta_1, \ldots, \eta_n \in \mathcal{D}_y$ such that 
$$
 \mathcal{D}_y = \bigcup_{i=1}^n \Omega^{\eta_i}.
$$
By (\ref{eq:Lg-1}), we may assume that $\Omega^{\eta_1}, \ldots , \Omega^{\eta_m}$ 
are disjoint.
Putting $n_y = n$ and writing $\Omega_y^i = \Omega^{\eta_i}$ ($i = 1, \ldots , n_y$), 
we obtain (iv).
\end{proof}

\section{Applications}

we consider the following three conditions:

Consider the case that $K$ is a one-point set. 
Then $\Lip (X, C(K))$ is isometrically isomorphic to $\Lip (X)$.
On the other hand, if $X$ is a one-point set, 
$\Lip (X, C(K))$ is isometrically isomorphic to $C(K)$.

\begin{cor}
Suppose that $X$ and $Y$ are compact metric spaces with metrics $d_X$ and $d_Y$ respectively.

{\rm (I)} If $T$ is a homomorphism from $\Lip (X)$ into $\Lip (Y)$, 
then there exist a clopen subset $Y_0$ of $Y$ and 
a continuous mapping $\varphi : Y_0 \rightarrow X$ with 
$$
 \sup_{y, y \in Y_0 \atop y \neq y'} 
 \frac{d_X ( \varphi (y), \varphi (y') )}{d_Y (y, y')}  <  \infty 
$$
such that $T$ has the form :
\begin{equation}\label{eq:FormA}
 (Tf) (y) = \begin{cases}
  f \bigl( \varphi (y) \bigr)  &  \bigr( y \in Y_0 \bigr) \\
 0  &  \bigl( y \in Y \smallsetminus Y_0 \bigr)
 \end{cases} 
\end{equation}
for all $f \in \Lip(X)$.
Conversely, if $Y_0$, $\varphi$ are given as above, 
then $T$ defined by {\rm (\ref{eq:FormA})} is a homomorphism from $\Lip (X)$ into $\Lip (Y)$.
Moreover, $T$ is unital if and only if $Y_0 = Y$.

{\rm (II)} 
Suppose that $T$ is a homomorphism from $\Lip (X)$ into $\Lip (Y)$ 
with the form {\rm (\ref{eq:FormA})}. 
Then $T$ is compact if and only if
$$
 \lim_{y, y' \in Y_0 \atop d_Y (y, y') \rightarrow 0}
 \frac{d_X ( \varphi (y), \varphi (y') )}{d_Y(y, y')}  =  0.
$$
\end{cor}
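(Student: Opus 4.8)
The plan is to derive the Corollary directly from Theorems \ref{Thm1} and \ref{Thm2} by specializing to the case where both $K$ and $M$ are one-point sets. Under this assumption the isometric isomorphisms $\Lip(X, C(K)) \cong \Lip(X)$ and $\Lip(Y, C(M)) \cong \Lip(Y)$ recorded above identify $T$ with a homomorphism from $\Lip(X)$ into $\Lip(Y)$, the product $Y \times M$ with $Y$, and $X \times K$ with $X$. Crucially, since $K$ consists of a single point the mapping $\psi$ can take only one value, so $\psi$ is automatically constant on every subset of its domain; likewise, since $M$ is a single point, for each $y$ the fiber $\mathcal{D}_y$ is either empty or all of $M$.

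For part (I), first I would apply the necessity direction of Theorem \ref{Thm1}. The clopen set $\mathcal{D} \subset Y \times M$ becomes, under the identification $Y \times M \cong Y$, a clopen set which I call $Y_0 \subset Y$, and $\varphi \colon \mathcal{D} \to X$ becomes $\varphi \colon Y_0 \to X$. Condition (i) of Theorem \ref{Thm1}, asserting a bound $L$ on the quotient $d_X(\varphi(y,\eta), \varphi(y',\eta))/d_Y(y,y')$, translates verbatim into the finiteness of the supremum in the Corollary. Condition (ii) degenerates: with $\psi$ constant one may take the trivial decomposition $n_\eta = 1$, $V_1^\eta = Y_0$, so that (\ref{eq:SetDist}) is vacuous. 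The form (\ref{eq:Form}) then becomes precisely (\ref{eq:FormA}). For the converse, given $Y_0$ and $\varphi$ as stated, I would set $\mathcal{D} = Y_0$ (identified with $Y_0 \times M$) and let $\psi$ be the constant map onto the single point of $K$; conditions (i) and (ii) of Theorem \ref{Thm1} hold as above, so the sufficiency direction of Theorem \ref{Thm1} yields that $T$ is a homomorphism. The unital statement follows from the corresponding clause of Theorem \ref{Thm1}, since $\mathcal{D} = Y \times M$ now reads $Y_0 = Y$.

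For part (II), I would invoke Theorem \ref{Thm2}. Condition (iv) holds automatically: for each $y$ the fiber $\mathcal{D}_y$ is either empty or the whole one-point space $M$, hence a single clopen set on which the constant map $\psi_y$ is trivially constant. Thus compactness of $T$ is equivalent to condition (iii) alone, which, after the identifications, states exactly that for every $\varepsilon > 0$ there is a $\delta > 0$ with $d_X(\varphi(y), \varphi(y'))/d_Y(y,y') < \varepsilon$ whenever $y, y' \in Y_0$ and $0 < d_Y(y,y') < \delta$; this is a reformulation of the limit displayed in the Corollary. The only point requiring care, and the closest thing to an obstacle, is the verification that the $\psi$-dependent parts of conditions (ii) and (iv) genuinely collapse to triviality when $K$ and $M$ are singletons; once this degeneration is recorded, the rest is a routine transcription of Theorems \ref{Thm1} and \ref{Thm2}.
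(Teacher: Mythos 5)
Your proposal is correct and follows exactly the route the paper intends: the paper presents this corollary as an immediate specialization of Theorems \ref{Thm1} and \ref{Thm2} to the case where $K$ and $M$ are one-point sets, using the isometric isomorphisms $\Lip(X,C(K))\cong\Lip(X)$ and $\Lip(Y,C(M))\cong\Lip(Y)$, with conditions (ii) and (iv) degenerating because $\psi$ is automatically constant. Your careful recording of these degenerations (taking $n_\eta=1$, $V_1^\eta=\mathcal{D}^\eta$, and noting $\mathcal{D}_y\in\{\emptyset,M\}$) is precisely the routine verification the paper leaves implicit.
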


Now we turn to another setting.

\begin{cor}
Suppose that $K$ and $M$ are compact Hausdorff spaces.

{\rm (I)} If $T$ is a homomorphism from $C(K)$ into $C(M)$, 
then there exist a clopen subset $M_0$ of $M$ and 
a continuous mapping $\psi : M_0 \rightarrow K$ 
such that $T$ has the form :
\begin{equation}\label{eq:FormB}
 (Tf) (\eta) = \begin{cases}
  f \bigl( \psi (\eta) \bigr)  &  \bigr( \eta \in M_0 \bigr) \\
 0  &  \bigl( \eta \in M \smallsetminus M_0 \bigr)
 \end{cases} 
\end{equation}
for all $f \in C(K)$.
Conversely, if $M_0$, $\psi$ are given as above, 
then $T$ defined by {\rm (\ref{eq:FormB})} is a homomorphism from $C(K)$ into $C(M)$.
Moreover, $T$ is unital if and only if $M_0 = M$.

{\rm (II)} 
Suppose that $T$ is a homomorphism from $C(K)$ into $C(M)$ 
with the form {\rm (\ref{eq:FormB})}. 
Then $T$ is compact if and only if
$M_0$ is a union of finitely many clopen subset $M_1, \ldots, M_n$ 
such that $\psi$ is constant on each $M_i$ for $i=1, \ldots, n$.
Moreover, $T$ is compact if and only if $T$ has a finite rank,
\end{cor}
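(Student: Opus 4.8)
The plan is to derive both parts of the corollary from Theorems~\ref{Thm1} and~\ref{Thm2} by specializing to the situation in which both $X$ and $Y$ are one-point sets, using the isometric isomorphisms $\Lip(X, C(K)) \cong C(K)$ and $\Lip(Y, C(M)) \cong C(M)$ recorded just above. Under this identification a homomorphism $T : C(K) \to C(M)$ is precisely a homomorphism between the corresponding Lipschitz algebras; the clopen set $\mathcal{D} \subseteq Y \times M$ becomes a clopen subset $M_0 \subseteq M$ because $Y$ is a singleton, the map $\varphi$ collapses to the constant map onto the single point of $X$, and $\psi : \mathcal{D} \to K$ becomes a continuous map $\psi : M_0 \to K$.

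For part~(I) I would invoke Theorem~\ref{Thm1}. Condition~(i) is vacuous, since $Y$ contains no two distinct points, and condition~(ii) is automatic: for each $\eta$ the slice $\mathcal{D}^{\eta}$ is either empty or the single point of $Y$, on which $\psi^{\eta}$ is trivially constant, so the required finite clopen decomposition holds with the separation constant $r$ chosen arbitrarily. Hence the form~(\ref{eq:Form}) reads exactly as~(\ref{eq:FormB}), the converse direction is inherited from the converse in Theorem~\ref{Thm1}, and the unital criterion $\mathcal{D} = Y \times M$ becomes $M_0 = M$.

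For part~(II) I would apply Theorem~\ref{Thm2}. Condition~(iii) is again vacuous because there are no distinct $y, y' \in Y$, so compactness is equivalent to condition~(iv) alone. Writing $y_0$ for the point of $Y$, we have $\mathcal{D}_{y_0} = M_0$ and $\psi_{y_0} = \psi$, so~(iv) states precisely that $M_0$ is a disjoint union of finitely many clopen sets $M_1, \dots, M_n$ on each of which $\psi$ is constant; this is the asserted criterion.

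For the \emph{Moreover} clause I would argue directly from the explicit form. If $T$ is compact, then by the criterion $\psi \equiv \xi_i$ on $M_i$ for some $\xi_i \in K$, so for every $f \in C(K)$ one has $Tf = \sum_{i=1}^{n} f(\xi_i)\, \mathbf{1}_{M_i}$, where $\mathbf{1}_{M_i} \in C(M)$ is the continuous indicator of the clopen set $M_i$; thus the range of $T$ lies in the span of $\mathbf{1}_{M_1}, \dots, \mathbf{1}_{M_n}$ and $T$ has finite rank. The reverse implication is the standard fact that a bounded finite-rank operator is compact. No step poses a genuine obstacle; the only point requiring care is verifying that the hypotheses of Theorem~\ref{Thm2} degenerate as claimed, which the one-point structure of $X$ and $Y$ renders transparent.
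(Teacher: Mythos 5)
Your proposal is correct and follows exactly the route the paper intends: the corollary is stated as an application of Theorems~\ref{Thm1} and~\ref{Thm2} via the identifications $\Lip(X,C(K))\cong C(K)$ and $\Lip(Y,C(M))\cong C(M)$ when $X$ and $Y$ are singletons, with conditions (i) and (iii) vacuous and (ii) trivial, precisely as you argue. Your direct verification of the \emph{Moreover} clause --- $Tf=\sum_{i=1}^{n} f(\xi_i)\,\mathbf{1}_{M_i}$ gives finite rank, and bounded finite-rank operators are compact --- supplies the one step the theorems do not cover, and it is the evident intended argument.
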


\begin{cor}
Suppose that $X$ is a compact metric space with metric $d_X$, and that $M$ is a compact Hausdorff space.

{\rm (I)} If $T$ is a homomorphism from $\Lip (X)$ into $C(M)$, 
then there exist a clopen subset $M_0$ of $M$ and 
a continuous mapping $\varphi : M_0 \rightarrow X$ 
such that $T$ has the form :
\begin{equation}\label{eq:FormC}
 (Tf) (\eta) = \begin{cases}
  f \bigl( \varphi (\eta) \bigr)  &  \bigr( \eta \in M_0 \bigr) \\
 0  &  \bigl( \eta \in M \smallsetminus M_0 \bigr)
 \end{cases} 
\end{equation}
for all $f \in \Lip(X)$.
Conversely, if $M_0$, $\varphi$ are given as above, 
then $T$ defined by {\rm (\ref{eq:FormC})} is a homomorphism from $\Lip (X)$ into $C(M)$.
Moreover, $T$ is unital if and only if $M_0 = M$.

{\rm (II)} 
Every homomorphism from $\Lip (X)$ into $C(M)$ is compact.
\end{cor}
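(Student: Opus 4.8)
The plan is to realize this statement as a degenerate instance of Theorem \ref{Thm2}. I would first record the two identifications already noted in this section: taking $K$ to be a one-point space gives $C(K) = \mathbb{C}$ and hence $\Lip (X, C(K)) = \Lip (X)$ isometrically, while taking $Y$ to be a one-point space $\{ y_0 \}$ gives $\Lip (Y, C(M)) = C(M)$ (there are no two distinct points of $Y$ on which to impose a Lipschitz condition, and the $C(Y, C(M))$-norm is just evaluation at $y_0$). Under these identifications, any homomorphism $T : \Lip (X) \to C(M)$ is a homomorphism from $\Lip (X, C(K))$ into $\Lip (Y, C(M))$, and by part (I) it already has the form (\ref{eq:FormC}), with a clopen $M_0 \subseteq M$ and a continuous $\varphi : M_0 \to X$. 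It therefore suffices, by Theorem \ref{Thm2}, to verify that conditions (iii) and (iv) hold; then $T$ is compact.

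For (iii), I would observe that since $Y = \{ y_0 \}$ consists of a single point, there exist no two distinct $y, y' \in Y$. Consequently the hypothesis ``$(y, \eta), (y', \eta) \in \mathcal{D}$ and $0 < d_Y (y, y') < \delta$'' is never met, so the implication in (iii) holds vacuously for every $\varepsilon > 0$ (with any choice of $\delta > 0$). This is exactly the sense in which collapsing the target metric space to a point annihilates the Lipschitz-type obstruction (iii).

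For (iv), I would use that $K$ is a single point, so that the mapping $\psi$ takes values in a one-point set and hence $\psi_y$ is automatically constant on any subset of $\mathcal{D}_y$. Thus I may take $n_y = 1$ and $\Omega_y^1 = \mathcal{D}_y$; this set coincides with $M_0$ under the identification, and it is clopen because $\mathcal{D}$ is clopen (Theorem \ref{Thm1}). Therefore (iv) holds. With (iii) and (iv) established, Theorem \ref{Thm2} gives that $T$ is compact.

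I do not expect a genuine obstacle here: the entire content is the dual observation that collapsing $Y$ to a point makes the local-Lipschitz condition (iii) vacuous, while collapsing $K$ to a point makes the locally-constant condition (iv) automatic. The only point demanding a little care is the verification that $\mathcal{D}_y$ is genuinely clopen in $M$, so that it legitimately serves as the single clopen piece $\Omega_y^1$ required by (iv); this follows immediately from the clopenness of $\mathcal{D}$.
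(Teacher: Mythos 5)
Your proposal is correct and matches the paper's intended argument: the corollary is stated as an immediate application of Theorems \ref{Thm1} and \ref{Thm2} under exactly the identifications you use (one-point $K$ giving $\Lip(X, C(K)) \cong \Lip(X)$, one-point $Y$ giving $\Lip(Y, C(M)) \cong C(M)$), with (iii) holding vacuously and (iv) holding automatically. Your added check that $\mathcal{D}_y \cong M_0$ is clopen is exactly the right detail, and nothing further is needed.
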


\newpage

\begin{cor}
Suppose that $Y$ is a compact metric space with metric $d_Y$, and that $K$ is a compact Hausdorff space.

{\rm (I)} If $T$ is a homomorphism from $C(K)$ into $\Lip (Y)$, 
then $Y$ is a union of finitely many disjoint clopen subset $Y_0, Y_1, \ldots, Y_n$
and there exist constant mappings $\psi_i : Y_i \rightarrow K$ ($i=1, \ldots, n$) 
such that $T$ has the form :
\begin{equation}\label{eq:FormD}
 (Tf) (y) = \begin{cases}
  f \bigl( \psi_i (y) \bigr)  &  \bigr( y \in Y_i , i=1, \ldots, n \bigr) \\
 0  &  \bigl( y \in Y_0 \bigr)
 \end{cases} 
\end{equation}
for all $f \in C(K)$.
Conversely, if $Y_0, Y_1, \ldots, Y_n$, $\psi_1, \ldots , \psi_n$ are given as above, 
then $T$ defined by {\rm (\ref{eq:FormD})} is a homomorphism from $C(K)$ into $\Lip (Y)$.
Moreover, $T$ is unital if and only if $Y_0 = \emptyset$.

{\rm (II)} 
Every homomorphism from $C(K)$ into $\Lip (Y)$ has a finite rank.
\end{cor}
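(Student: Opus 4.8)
The plan is to deduce the corollary from Theorem~\ref{Thm1} by specializing to the case in which both $X$ and $M$ reduce to a single point, using the isometric identifications $\Lip(X, C(K)) \cong C(K)$ and $\Lip(Y, C(M)) \cong \Lip(Y)$ recorded at the beginning of this section. First I would view a homomorphism $T : C(K) \to \Lip(Y)$ as a homomorphism from $\Lip(X, C(K))$ into $\Lip(Y, C(M))$ and invoke Theorem~\ref{Thm1}. Since $M$ is a singleton, $Y \times M$ is identified with $Y$, so the clopen set $\mathcal{D}$ becomes a clopen subset of $Y$; the map $\varphi$ takes values in the one-point space $X$ and is therefore the unique constant map, while $\psi : \mathcal{D} \to K$ is continuous. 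Condition~(i) holds automatically with $L = 0$ because $d_X \equiv 0$, and condition~(ii), applied at the unique point $\eta$ of $M$, asserts exactly that $\mathcal{D}$ splits as a union of finitely many pairwise disjoint clopen sets $V_1, \dots, V_n$ on each of which $\psi$ is constant.

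To establish part~(I) I would set $Y_0 = Y \smallsetminus \mathcal{D}$, $Y_i = V_i$, and take $\psi_i : Y_i \to K$ to be the constant restriction of $\psi$ for $i = 1, \dots, n$. Then $Y_0, Y_1, \dots, Y_n$ are disjoint clopen sets covering $Y$, and substituting this data into~(\ref{eq:Form}) produces precisely the form~(\ref{eq:FormD}). For the converse I would reverse the construction, putting $\mathcal{D} = Y_1 \cup \cdots \cup Y_n$ and applying the converse half of Theorem~\ref{Thm1}: condition~(i) is vacuous, and condition~(ii) holds because finitely many pairwise disjoint compact clopen subsets of the compact metric space $Y$ satisfy $d_Y(Y_i, Y_j) > 0$ whenever $i \neq j$, so one may take $r = \min_{i \neq j} d_Y(Y_i, Y_j) > 0$. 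The unital clause then follows from the final sentence of Theorem~\ref{Thm1}, since $\mathcal{D} = Y$ is equivalent to $Y_0 = \emptyset$.

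For part~(II) the decisive point is that each $\psi_i$ is \emph{constant}, say with value $k_i \in K$, so that~(\ref{eq:FormD}) can be rewritten as
$$
 Tf = \sum_{i=1}^{n} f(k_i)\, g_i \qquad \bigl( f \in C(K) \bigr),
$$
where $g_i \in \Lip(Y)$ is the function equal to $1$ on $Y_i$ and $0$ on $Y \smallsetminus Y_i$. Each $g_i$ is Lipschitz: because $Y_i$ is clopen, the disjoint compact sets $Y_i$ and $Y \smallsetminus Y_i$ are separated by a positive distance, which bounds the Lipschitz constant of $g_i$. Hence the range of $T$ is contained in the finite-dimensional subspace of $\Lip(Y)$ spanned by $g_1, \dots, g_n$, so $T$ has finite rank (and in particular is compact, in accordance with Theorem~\ref{Thm2}).

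I do not anticipate a substantive obstacle, as the corollary is in essence a translation of Theorem~\ref{Thm1} into this degenerate setting. The only step meriting (routine) attention is the verification that the indicator functions $g_i$ lie in $\Lip(Y)$, which rests on the separation of disjoint clopen sets in a compact metric space---a fact already used in the proof of condition~(ii) of Theorem~\ref{Thm1}.
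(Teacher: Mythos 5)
Your proposal is correct and takes essentially the same route the paper intends: the corollary is precisely Theorem~\ref{Thm1} specialized via the isometric identifications $\Lip(X, C(K)) \cong C(K)$ and $\Lip(Y, C(M)) \cong \Lip(Y)$ for singleton $X$ and $M$, with condition (i) trivial, condition (ii) supplying (and, conversely, following from the positive separation of) the disjoint clopen pieces $Y_1, \ldots, Y_n$. Your direct finite-rank argument for part (II), writing $Tf = \sum_{i=1}^{n} f(k_i)\, g_i$ with the Lipschitz indicator functions $g_i$ of the clopen sets $Y_i$, is the evident verification and is sound.
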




\end{document}